\def\longfib{\DOTSB\relbar\joinrel\twoheadrightarrow}
\newtheorem{thm}[subsection]{Theorem}
\newtheorem{theorem}[subsection]{Theorem}
\newtheorem{prop}[subsection]{Proposition}
\newtheorem{proposition}[subsection]{Proposition}
\newtheorem{cor}[subsection]{Corollary}
\newtheorem{corollary}[subsection]{Corollary}
\newtheorem{lemma}[subsection]{Lemma}
\theoremstyle{definition}  
\newtheorem{example}[subsection]{Example}
\newtheorem{remark}[subsection]{Remark}
\newtheorem{definition}[subsection]{Definition}
\DeclareMathOperator{\Hom}{Hom}
\DeclareMathOperator{\Ob}{Ob}
\DeclareMathOperator{\End}{End}
\DeclareMathOperator{\Mod}{Mod-}
\DeclareMathOperator{\Stmod}{Stmod}
\DeclareMathOperator{\Ho}{Ho}
\DeclareMathOperator{\id}{id}
\DeclareMathOperator{\Ev}{Ev}
\DeclareMathOperator{\Ann}{Ann}
\DeclareMathOperator{\ho}{Ho}
\DeclareMathOperator{\Vect}{Vect}
\DeclareMathOperator{\hEnd}{hEnd}
\DeclareMathOperator{\coeq}{coeq}
\newcommand{\dfn}{\textbf} 
\newcommand{\mdfn}[1]{\dfn{\mathversion{bold}#1}} 
\newcommand{\Z}{{\mathbb Z}}
\newcommand{\bZ}{{\mathbb Z}}
\newcommand{\A}{{\mathcal A}}
\newcommand{\cB}{{\mathcal B}}
\newcommand{\ucB}{\underline{\cB}}
\newcommand{\cM}{{\mathcal M}}
\newcommand{\cN}{{\mathcal N}}
\newcommand{\M}{{\mathcal M}}
\newcommand{\cC}{{\mathcal C}}
\newcommand{\cD}{{\mathcal D}}
\newcommand{\E}{\mathcal E}
\newcommand{\cV}{\mathcal V}
\newcommand{\F}{\mathbb{F}}
\newcommand{\Q}{\mathbb{Q}}
\newcommand{\EM}{{\mathcal E}_M}
\newcommand{\iso}{\cong}
\newcommand{\sm}{\wedge}
\newcommand{\Wedge}{{\scriptstyle \vee}}
\newcommand{\mc}{\colon \,}
\newcommand{\Ch}{{\mathcal C}h}
\newcommand{\dga}{dga }
\newcommand{\dgap}{dga}
\newcommand{\dgas}{dgas }
\newcommand{\dgasp}{dgas}
\newcommand{\im}{\mbox{im}}
\newcommand{\PM}{P_{\bullet}M}
\newcommand{\ra}{\rightarrow}
\renewcommand{\to}{\longrightarrow}
\newcommand{\varrow}[1]{\hbox to #1{\rightarrowfill}}
\newcommand{\varl}[2]{\stackrel{#2}{\hbox to #1{\leftarrowfill}}}
\newcommand{\varrx}[2]{\stackrel{#2}{\hbox to #1{\rightarrowfill}}}
\newcommand{\parallelarrows}[1]{\begin{array}{c} {\hbox to
#1{\rightarrowfill}}  \vspace{-0.35cm} \\ {\hbox to
#1{\rightarrowfill}} \end{array}}
\newcommand{\we}{\llra{\sim}}                   
\newcommand{\bwe}{\llla{\sim}}
\newcommand{\cofib}{\rightarrowtail}              
\newcommand{\fib}{\twoheadrightarrow}           
\newcommand{\trfib}{\stackrel{\sim}{\longfib}}
\newcommand{\trcof}{\stackrel{\sim}{\cofib}}
\newcommand{\inc}{\hookrightarrow}              
\newcommand{\dbra}{\rightrightarrows}           
\newcommand{\he}{\simeq}
\newcommand{\Set}{{\mathcal Set}}
\newcommand{\tens}{\otimes}
\newcommand{\uMI}{\underline{\cM^I}}
\newcommand{\blank}{-}
\newcommand{\adjoint}{\rightleftarrows}
\newcommand{\uM}{\underline{\cM}}
\newcommand{\cFG}{{\mathcal F}G}
\DeclareMathOperator{\Ext}{Ext}
\newcommand{\lra}{\longrightarrow}              
\newcommand{\lla}{\longleftarrow}               
\newcommand{\llra}[1]{\stackrel{#1}{\lra}}      
\newcommand{\llla}[1]{\stackrel{#1}{\lla}}      
\newcommand{\dcoprod}{\displaystyle\coprod}
\newcommand{\Me}{\cM_\epsilon}
\renewcommand{\Re}{R_\epsilon}
\newcommand{\Te}{T_\epsilon}
\newcommand{\ke}{k_\epsilon}
\newcommand{\Ae}{\A_\epsilon}
\newcommand{\uL}{\underline{L}}
\newcommand{\uR}{\underline{R}}
\numberwithin{equation}{section}
\newenvironment{myequation}
  {\addtocounter{subsection}{1}\begin{equation}}
  {\end{equation}$\!\!$}
\begin{document}

\title[Example of two model categories]
{A curious example of two model categories and
  some associated differential graded algebras}.

\date{\today; 1991 AMS Math.\ Subj.\ Class.: 18G55 (18E30, 18F25, 55U35)}

\author{Daniel Dugger}
\thanks{
}
\address{Department of Mathematics\\ University of Oregon\\ Eugene, OR 97403}
\email{ddugger@math.uoregon.edu}
\author{Brooke Shipley}
\thanks{The first author was partially supported by NSF grant
  DMS0604354.
The second author was partially supported by NSF Grant DMS0706877 and the Centre de Recerca
Matem{\`a}tica (Barcelona, Spain).}
\address{Department of Mathematics \\ 510 SEO (m/c 249)\\
University of Illinois at Chicago\\
Chicago, IL 60607
\\ USA}
\email{bshipley@math.uic.edu}

\begin{abstract}
The paper gives a new proof that the model categories of stable
modules for the rings $\Z/p^2$ and $\Z/p[\epsilon]/(\epsilon^2)$ are
not Quillen equivalent.  The proof uses homotopy endomorphism ring
spectra.  Our considerations lead to an example of two differential
graded algebras which are derived equivalent but whose associated
model categories of modules are not Quillen equivalent.  As a
bonus, we also obtain derived equivalent dgas with non-isomorphic
$K$-theories.
\end{abstract}

\maketitle

\tableofcontents

\section{Introduction}\label{sec-intro}

This paper examines two model categories $\cM$ and $\Me$, namely the
stable module categories of the rings $\Z/p^2$ and
$\Z/p[\epsilon]/(\epsilon^2)$.
 It is known from \cite{marco} that $\cM$ and $\Me$ have
equivalent homotopy categories, and that algebraic $K$-theory
computations show that $\cM$ and $\Me$ are {\it not\/} Quillen
equivalent.  Even more, by \cite{TV} it follows that the simplicial
localizations of $\cM$ and $\Me$ are not equivalent.  The point of
this paper is to explore the homotopy theory of $\cM$ and $\Me$ in
more detail, and to give a more elementary proof that they are not
Quillen equivalent.  Our proof uses homotopy endomorphism spectra
rather than algebraic $K$-theory.  Differential graded algebras come
into the picture in that the model categories $\cM$ and $\Me$ are
Quillen equivalent to modules over certain dgas.

\medskip

Throughout the paper we fix a prime $p$ and let $k=\Z/p$.  We write $R=\Z/p^2$ and
$\Re=k[\epsilon]/(\epsilon^2)$. Each of these is a Frobenius ring, in the sense that the
injectives and projectives are the same.  As explained in \cite[2.2]{hovey-book}, there is a model
category structure on the category of $R$-modules (respectively, $\Re$-modules) where the
cofibrations are the injections, the fibrations are the surjections, and the weak equivalences are
the `stable homotopy equivalences'.  For the latter, recall that two maps $f,g\colon J \ra K$ are
said to be {\it stably homotopic\/} if their difference factors through a projective; and a {\it
stable homotopy equivalence\/} is a map $h\colon J \ra K$ for which there exists an $h'\colon K\ra
J$ where the two composites are stably homotopic to the respective identities.  We write
$\Stmod(R)$ for this model category structure, and throughout the paper we write $\cM=\Stmod(R)$
and $\Me=\Stmod(\Re)$.  These are stable model categories, in the sense that the suspension
functors on the homotopy categories are self-equivalences.

It is easy to see that the homotopy categories $\Ho(\cM)$ and
$\Ho(\Me)$ are both equivalent to the category of $k$-vector spaces.
Even more, the suspension functor on both categories is isomorphic to
the identity, and so $\Ho(\cM)$ and $\Ho(\Me)$ are equivalent as
triangulated categories.  In \cite{marco} Schlichting studied the
Waldhausen $K$-theory of the finitely-generated (or compact) objects in each
category, and observed that when $p>3$ they differ starting at $K_4$.
Specifically, $K_4(\cM)\iso\Z/p^2$ whereas $K_4(\Me)\iso \Z/p\oplus
\Z/p$.  These computations follow from classical computations of the
algebraic $K$-theory of $R$ and $\Re$ from \cite{EF} and \cite{ALPS};
see also Remark~\ref{rem-schlich}.
By arguments from \cite{ds1}, this difference in $K$-theory groups
implies that $\cM$ and $\Me$ are not Quillen equivalent.  By
\cite[Cor. 1.4]{TV}, it even implies that the simplicial localizations
of $\cM$ and $\Me$ are not equivalent.

Now, $K_4$ is a fairly elaborate invariant and the computations in
\cite{EF} and \cite{ALPS} are quite involved.  Given that $\cM$ and
$\Me$ are such simple model categories, it is natural to ask for a
more down-to-earth explanation for why they are not Quillen
equivalent.  Our goal in this paper is to give such an explanation.

\medskip

Before explaining more about how we ultimately differentiate $\cM$ and
$\Me$, it seems worthwhile to point out further ways in which they are
very similar.  Every $R$-module decomposes (non-canonically) as $F\oplus
V$ where $F$ is free and $V$ is a $k$-vector space (regarded as an
$R$-module via the quotient map $R\ra k$).  Similarly, every
$\Re$-module also decomposes as the direct sum of a free module and a
$k$-vector space.  In some sense the categories of $R$-modules and
$\Re$-modules are close to being equivalent even without the model
structure, the only difference being in the endomorphisms of the free
module $R$ compared to the free module $\Re$.  But free modules are
{\it contractible\/} in $\cM$ and $\Me$!  This might lead one to
mistakenly suspect that $\cM$ and $\Me$ were Quillen equivalent.

It is well-known that the homotopy category only encodes `first-order'
information in a model category.  One place that encodes higher-order
information is the homotopy function complexes defined by Dwyer-Kan
(see \cite[Chapter 17]{H}).  It turns out that every homotopy function
complex in $\cM$ is weakly equivalent to the corresponding homotopy
function complex in $\Me$, though.  This is because $\cM$ and $\Me$
are additive  categories, and therefore their homotopy function
complexes have models which are simplicial abelian groups---in other
words, they are generalized Eilenberg-MacLane spaces.  It follows that
the only information in the homotopy type of these function complexes
is in their homotopy groups, and such information is already in the
homotopy category.

It seems clear that the difference between $\cM$ and $\Me$ has to come
from some process which considers more than just the maps between two
objects; perhaps it has something to do with composition of maps,
rather than just looking at maps by themselves.  This is the tack we
take in the present paper.

In \cite{D} it is shown that if $X$ is an object in a stable,
combinatorial model category then there is a symmetric ring spectrum
$\hEnd(X)$---well defined up to homotopy---called the \dfn{homotopy
endomorphism spectrum} of $X$.  It is proven in \cite{D} that this
ring spectrum is
invariant under Quillen equivalence.  In the present paper we first
argue that any Quillen equivalence between $\cM$ and $\Me$ must take
the object $k\in \cM$ to something weakly equivalent to the object
$k\in \Me$.  We then compute
the two homotopy endomorphism spectra of $k$ (considered as an object
of $\cM$ and as an object of $\Me$) and we prove that these are not
weakly equivalent as ring spectra.  This then proves that $\cM$ and
$\Me$ are not Quillen equivalent; see Theorem~\ref{thm-qeq}.  The
important point here is that it is the
{\it ring structures\/} on the two spectra which are not weakly
equivalent---the difference cannot be detected just by looking at the
underlying spectra.

\subsection{Connections with differential graded algebras (dgas)}

In general, computing homotopy endomorphism ring spectra is a
difficult problem.  In our case it is easier because the two model
categories $\cM$ and $\Me$ are {\it additive\/} model categories, as
defined in \cite{ds2}.  The homotopy endomorphism spectra therefore
come to us as the Eilenberg-MacLane spectra associated to certain
``homotopy endomorphism dgas'' (investigated in \cite{ds2}), and what
we really do is compute these latter objects.  Unfortunately, such
dgas are {\it not\/} invariant under Quillen equivalence, which is why
we have to work with ring spectra.  This brings us to the question of
topological equivalence of dgas---that is to say, the question of when
two dgas give rise to weakly equivalent Eilenberg-MacLane ring
spectra.  Our task is to show that the dgas arising from $\cM$ and
$\Me$ are not topologically equivalent, which we do in
Proposition~\ref{prop-nte} by using some of the techniques from \cite{ds-tpwe}.

There is another connection with dgas, which comes from homotopical tilting theory.  Each of the
model categories $\cM$ and $\Me$ is an additive, stable, combinatorial model category with a
single compact generator (the object $k$, in both cases).  Let $T$ and $\Te$ denote the homotopy
endomorphism dgas of $k$ as computed in $\cM$ and $\Me$, respectively; see Theorem~\ref{main-thm}
and Corollary~\ref{cor-qe}. By results from \cite{D, ds2, ss2, dg-s}, it follows that $\cM$ and
$\Me$ are Quillen equivalent to the model categories $\Mod T$ and $\Mod \Te$, respectively.  In
fact, in this case it is quite easy to construct the Quillen equivalences directly without
referring to the cited work above.

 We can rephrase what we know about $\cM$ and $\Me$ in terms of $T$ and
 $\Te$.  The model categories of modules $\Mod T$ and $\Mod \Te$ have
 triangulated-equivalent homotopy categories but are not Quillen
 equivalent.  It is interesting to contrast this with
 the simpler case of rings: in \cite{ds1} it is shown that if $S$ and
 $S'$ are two rings then the model categories $\Ch_S$ and $\Ch_{S'}$
 are Quillen equivalent {\it if and only if\/} they have
 triangulated-equivalent homotopy categories (that is, if and only if
 $S$ and $S'$ are {\it derived equivalent\/}).  So this result does not
 generalize from rings to dgas.

It also follows from Schlichting's $K$-theory computations
and~\cite{ds1} that the $K$-theories of $T$ and $\Te$ are
non-isomorphic for $p>3$; see Remark~\ref{rem-schlich}.  Thus $T$ and
$\Te$ are derived equivalent dgas which for $p>3$ have non-isomorphic
$K$-theories.  Again, it was proven in \cite{ds1} that this cannot
happen for ordinary rings: derived equivalent rings have isomorphic
$K$-theory groups.  So this is another result which does not
generalize from rings to dgas.

\subsection{Diagram categories}
While our use of homotopy endomorphism spectra to differentiate $\cM$
and $\Me$ is more elementary than using algebraic $K$-theory, one
could make the case that it is still not all that elementary.  The basic
question of what is different about the underlying `homotopy theory'
represented in $\cM$ and $\Me$ is perhaps still not so clear.

A different approach to these issues is the following.  For any small
category $I$, one has model structures on the diagram categories
$\cM^I$ and $\Me^I$ in which the weak equivalences and fibrations are
objectwise.  If one can find an $I$ for which $\ho(\cM^I)$ and
$\ho(\Me^I)$ are not equivalent, then this proves that $\cM$ and $\Me$
are not Quillen equivalent.  The hope is that by looking at diagram
categories one could restructure higher-order information about $\cM$ (resp.
$\Me)$ into first-order information about $\cM^I$ (resp. $\Me^I$).

It is easy to see that for all $I$ and all diagrams $D_1,D_2\in
\Me^I$, the group $\ho(\Me^I)(D_1,D_2)$ is a $\Z/p$-vector space
(the additive structure comes from the fact that $\Me^I$ is a stable
model category); see Proposition~\ref{5.2}.  It is likewise true
that for all $I$ and all
diagrams $D_1,D_2\in \cM^I$, the abelian group $\ho(\cM^I)(D_1,D_2)$
is killed by $p^2$.  By analogy with what happens in the algebraic
$K$-theory computations, one might hope to find a certain category $I$
and two diagrams $D_1$ and $D_2$ in $\cM^I$ such that
$\ho(\cM^I)(D_1,D_2)$ is not killed by $p$.  This would prove that
$\cM$ and $\Me$ are not Quillen equivalent.

So far we have not been able to find such an $I$, but we would like
to suggest this as an intriguing open problem.  Here are some simple
results to get things started, which are proved as Propositions~\ref{5.3}
and~\ref{6.11}.  (For terminology, see Sections~\ref{se:diagram} and
~\ref{se:spectral}).

\begin{prop}
\label{pr:direct}
Let $I$ be a small, direct Reedy category.  Then for any two diagrams
$D_1,D_2\in \cM^I$, the abelian group $\ho(\cM^I)(D_1,D_2)$ is a
$\Z/p$-vector space.
\end{prop}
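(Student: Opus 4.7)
My approach is to show that $p \cdot \id_{D_2} \colon D_2 \to D_2$ is null-homotopic in $\cM^I$ for every $D_2 \in \cM^I$; this implies via post-composition that $p$ annihilates $\ho(\cM^I)(D_1, D_2)$, so this hom group is a $\Z/p$-vector space.

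In $\cM = \Stmod(R)$ alone, the null-homotopy of $p \cdot \id_M$ is classical: with a decomposition $M = F_M \oplus V_M$ where $F_M$ is free and $V_M$ is a $k$-vector space, the map sends $(f,v) \mapsto (pf,0)$ and factors through the projective $F_M$. The task is to assemble these pointwise factorizations into a single natural factorization $D_2 \xrightarrow{\alpha} Q \xrightarrow{\epsilon} D_2$ in $\cM^I$ with $Q$ objectwise projective. For direct Reedy $I$, I would proceed by induction on the Reedy degree: at each object $j$, choose a splitting $D_2(j) = F_j \oplus V_j$ and a generating set $G_j \subset D_2(j)$ (an $R$-basis of $F_j$ together with a lift of a $k$-basis of $V_j$) coherently with those chosen at lower degrees, then set $Q(j) = R\langle G_j\rangle$, define $\epsilon_j([g]) = g$, and define $\alpha_j(x) = p \cdot (\text{expression of } x \text{ in } G_j)$. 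The key observation making $\alpha_j$ an $R$-linear, well-defined map is that $p^2 = 0$ in $R$ absorbs the mod-$p$ ambiguity in lifting $k$-coefficients; the identity $\epsilon_j \alpha_j = p \cdot \id_{D_2(j)}$ then follows from a direct computation.

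The main technical obstacle is enforcing strict naturality of $\alpha$ and strict functoriality of the structure maps $Q(\psi)$: these maps involve lifts of $k$-coefficients whose differing choices introduce $p$-torsion discrepancies between $Q(\phi\psi)$ and $Q(\phi)Q(\psi)$, but these discrepancies are eliminated after composition with the factor of $p$ built into $\alpha$, since $p^2 = 0$ in $R$. The direct Reedy hypothesis is essential because it permits coherent inductive choices of all splittings, generating sets, and lifts---strictly increasing degree prevents the cyclic compositional dependencies that would otherwise obstruct a coherent global choice of lifts.
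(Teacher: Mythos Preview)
Your reduction to showing that $p\cdot\id_{D_2}$ is null in $\Ho(\cM^I)$ is exactly right, and the instinct to factor $p$ through an objectwise-free diagram is also correct.  The gap is in the construction of $Q$ as a genuine functor.  The direct Reedy hypothesis rules out cycles, but it does \emph{not} rule out relations among morphisms: already in the coequalizer category $0\rightrightarrows 1\to 2$ (which is direct Reedy) the two composites $0\to 2$ coincide, so your lifts $Q(\psi)$ must satisfy $Q(g)Q(f_1)=Q(g)Q(f_2)$.  Your inductive scheme chooses $Q(f_1),Q(f_2)$ at stage~1 and $Q(g)$ at stage~2 independently, and nothing forces this relation.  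The remark that ``the factor of $p$ in $\alpha$ absorbs the discrepancy'' does not help: if $Q$ is not a functor then it is not an object of $\cM^I$, and there is no morphism $\alpha\colon D_2\to Q$ in $\cM^I$ to speak of.

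The paper avoids this by separating the two tasks.  First replace $D_2$ by a Reedy-cofibrant diagram $D$; then use the \emph{functorial} factorization in the model category $\cM^I$ to factor $0\to D$ as $0\trcof F\fib D$.  Here $F$ is automatically a functor (it comes from the small-object argument), and since $0\to F$ is a weak equivalence each $F(i)$ is stably contractible, hence free.  Now only the lift $D\to F$ of $p\colon D\to D$ needs to be built by Reedy induction.  At an object $i$ of degree $n$ this amounts to finding a diagonal in
\[
\xymatrix{ L_iD \ar[r]\ar@{ >->}[d] & F(i)\ar@{->>}[d] \\ D(i)\ar[r]^{p} & D(i), }
\]
where $L_iD\to D(i)$ is a cofibration by Reedy cofibrancy.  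The key pointwise lemma is that for any cofibration $A\cofib B$ in $\cM$, any surjection $F\fib B$ with $F$ free, and bottom map multiplication by $p$, such a lift exists; this is checked on the generating cofibrations $0\to k$, $0\to R$, $k\hookrightarrow R$ and extended via the classification of monomorphisms in $\cM$.  The Reedy machinery then assembles these local lifts into a map of diagrams, and the relations in $I$ are handled automatically because they are already encoded in the latching objects.
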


Another thing one can prove is the following:

\begin{prop}
Let $I$ be a free category (or more generally, a category with
$\Z/p$-cohomological dimension equal to one).  Then there is a
bijection $\alpha \colon \Ob \Ho(\cM^I) \ra \Ob \Ho(\Me^I)$, with the
property that for
any two diagrams $D_1,D_2\in \Ho(\cM^I)$ the abelian groups
\[ \Ho(\cM^I)(D_1,D_2) \qquad\text{and}\qquad \Ho(\Me^I)(\alpha
D_1,\alpha D_2) \]
are $\Z/p$-vector spaces of the same dimension.
\end{prop}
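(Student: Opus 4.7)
The plan is to exploit the fact that every $R$-module (respectively $\Re$-module) splits as a free summand plus a $k$-vector space, and that the free summand is contractible in $\cM$ (respectively $\Me$). Since $k$-vector spaces sit identically inside both $\Mod R$ (via the quotient $R\to k$) and $\Mod\Re$ (via $\Re\to k$), any diagram in $\Vect_k$ can be regarded as an object of either $\cM^I$ or $\Me^I$; this will supply the bijection $\alpha$.

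First I would show that every $D\in\cM^I$ is weakly equivalent in the projective model structure to a diagram factoring through $\Vect_k$. At each vertex $v$, choose a splitting $D(v)\simeq F(v)\oplus V(v)$ with $V(v)$ a $k$-vector space, and then try to rigidify the edge maps to be $k$-linear. This is an obstruction-theoretic construction; the obstructions to gluing the pointwise splittings into a coherent functor on $I$ live in cohomology groups $H^s(I;-)$ for $s\geq 2$, all of which vanish by the cohomological dimension hypothesis. The same argument works in $\Me^I$, so $\alpha$ can be defined by replacing an object of $\Ho(\cM^I)$ by its underlying diagram in $\Vect_k^I$ and then re-interpreting that diagram in $\Ho(\Me^I)$.

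For the dimension count I would invoke the Bousfield--Kan-style spectral sequence for Hom sets in a projective diagram category of a stable model category,
\[ E_2^{s,t} \;=\; H^s(I;\,\Ho(\cC)(D_1, D_2[t])) \;\Longrightarrow\; \Ho(\cC^I)(D_1, D_2)[s+t]. \]
By hypothesis only $s=0$ and $s=1$ contribute, so the sequence degenerates and $\Ho(\cC^I)(D_1,D_2)$ fits into a short exact sequence with outer terms $H^0(I;\Ho(\cC)(D_1,D_2))$ and $H^1(I;\Ho(\cC)(D_1,D_2[-1]))$. A direct computation shows that $\Sigma k\simeq k$ in both $\cM$ and $\Me$, and consequently $\Ho(\cM)(V,W[n])\cong\Hom_k(V,W)\cong\Ho(\Me)(V,W[n])$ for every $V,W\in\Vect_k$ and every $n\in\Z$. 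Hence the two $E_2$ pages agree after passage through $\alpha$, and the middle terms have the same $\Z/p$-dimension.

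Finally, to verify that $\Ho(\cM^I)(D_1,D_2)$ is annihilated by $p$ and not merely by $p^2$, I would show that $p\cdot\id_D=0$ in $\Ho(\cM^I)$ for every $D$. Vertex-wise, $p\cdot\id_{D(v)}$ factors through the free summand of $D(v)$ and is null-homotopic in $\cM$; the obstructions to gluing these null-homotopies into a coherent one again lie in $H^{\geq 2}(I;-)$ and vanish. The main obstacle throughout is making the obstruction theory rigorous: the splittings $D(v)\simeq F(v)\oplus V(v)$ are non-canonical and edge maps can mix the two summands, so extracting the obstructions as cocycles in $H^*(I;-)$ requires care in identifying the appropriate coefficient system on $I$.
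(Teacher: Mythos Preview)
Your proposal is correct and follows essentially the same route as the paper: replace diagrams by $\Vect$-valued diagrams via obstruction theory (the paper carries this out explicitly by building a $0$-cycle in a double chain complex arising from a cosimplicial framing of $D$, with the spectral sequence's concentration in columns $0$ and $1$ guaranteeing the obstructions vanish), and then compare the two collapsing spectral sequences, which have identical $E_2$-pages. One small simplification the paper makes for the $p$-torsion step: once $D_2$ is known to be weakly equivalent to a diagram $E$ of $k$-vector spaces, the equation $p\cdot\id_E=0$ holds strictly in $\cM^I$, so no second obstruction-theoretic gluing of null-homotopies is needed.
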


The above proposition is weaker than saying that $\Ho(\cM^I)$ and
$\Ho(\Me^I)$ are equivalent as categories, but it makes it seem likely
that this is indeed the case.  The categories $0\ra 1\ra \cdots \ra n$
of $n$ composable arrows are examples of free categories.

The simplest category which has $\Z/p$-cohomological dimension greater
than one is the coequalizer category $I$ consisting of three objects
\[ \xymatrix{
0 \ar@<0.5ex>[r]\ar@<-0.5ex>[r] & 1 \ar[r] & 2
}
\]
and four non-identity maps: the three shown above, and the map which
is equal to the two composites.  This is a directed Reedy category, so
according to Proposition~\ref{pr:direct} all of the groups
$\Ho(\cM^I)(D_1,D_2)$ are $\Z/p$-vector spaces.
We have been unable to detect any differences between $\Ho(\cM^I)$ and
$\Ho(\Me^I)$ in this case.

\begin{remark}
Another approach to detecting differences between $\cM$ and $\Me$ is mentioned in~\cite{Muro}.
There Muro finds a difference in what he calls the ``cohomologically triangulated structures''
associated to $\cM$ and $\Me$, but only in the case $p=2$.  See also \cite{BM}.  It seems likely
that there is some connection between Muro's invariant and the one obtained in the present paper,
although our invariant works at all primes.
\end{remark}


\section{Background on model categories of stable modules}

In this section we establish some basic facts about the categories $\cM=\Mod
R$  and $\Me=\Mod \Re$ of $R$-modules and $\Re$-modules.  We develop
the results for $\cM$, but then remark that the proofs all work
identically for $\Me$.

\medskip

If $M$ is a module over $\Z/p^2$, let $\Gamma M$ denote $(\Ann_M
p)/pM$.  Note that this is a $\Z/p$-vector space.  Let $C_*(M)$ denote
the chain complex with $M$ in every dimension and where the
differentials are all multiplication by $p$.  So $\Gamma M$ is just
the homology of $C_*(M)$, say in dimension $0$.

\begin{lemma}
\label{le:Gamma}
Every module $M$ over $\Z/p^2$ is isomorphic (non-canonically) to a direct
sum of $\Gamma M$ and a free module.
\end{lemma}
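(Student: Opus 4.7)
The plan is to exhibit an explicit splitting of the surjection $p\colon M \fib pM$, use the resulting free submodule as the free summand $F$, and then choose a vector-space complement of $pM$ inside $\Ann_M p$ to serve as a copy of $\Gamma M$.

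First I would observe that $pM$ is naturally a $\Z/p$-vector space, since $p^2=0$ in $R$, and pick any $\Z/p$-basis $\{e_i\}_{i\in I}$ of $pM$. For each $i$ choose a preimage $m_i \in M$ with $pm_i = e_i$; such a lift exists because every element of $pM$ is by definition $p$ times something. Let $F \subseteq M$ be the submodule generated by $\{m_i\}$. The key intermediate claim is that $F$ is in fact free on the $m_i$. To see this, suppose $\sum a_i m_i = 0$ with $a_i \in R$; multiplying by $p$ gives $\sum \bar a_i e_i = 0$ in $pM$ (where $\bar a_i$ is the reduction mod $p$), so each $a_i$ lies in $pR$. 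Writing $a_i = pb_i$ and plugging back in, each term $a_i m_i = b_i e_i$ lives in $pM$, and independence of the $e_i$ forces $b_i \in pR$ as well; hence $a_i = p^2 b_i' = 0$. Along the way this also shows that $pF = pM$ and that $F \cap \Ann_M p = pF = pM$.

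Next I would verify $M = F + \Ann_M p$: for any $m\in M$, the element $pm$ lies in $pM = pF$, so $pm = pf$ for some $f\in F$, whence $m - f \in \Ann_M p$ and $m = f + (m-f)$. Now pick any $\Z/p$-vector space complement $V$ of $pM$ inside $\Ann_M p$, so that $\Ann_M p = pM \oplus V$ as $\Z/p$-vector spaces; by construction the composite $V \hookrightarrow \Ann_M p \twoheadrightarrow \Gamma M$ is an isomorphism of $\Z/p$-vector spaces. Combining with the previous paragraph, $F + V \supseteq F + pM + V = F + \Ann_M p = M$, while $F \cap V \subseteq (F \cap \Ann_M p) \cap V = pM \cap V = 0$. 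Therefore $M = F \oplus V$ with $F$ free and $V \cong \Gamma M$, completing the proof.

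The only delicate step is showing the lifts $\{m_i\}$ generate a free submodule; the argument above uses $p^2 = 0$ essentially, by peeling off one factor of $p$ from any relation and observing that a second factor of $p$ is forced. Everything else is routine linear algebra, and the argument works verbatim for arbitrary (not necessarily finitely generated) $M$ once one invokes the axiom of choice to pick the basis $\{e_i\}$ and the complement $V$. The analogous statement for $\Re = k[\epsilon]/(\epsilon^2)$-modules will follow by the same argument with $p$ replaced by $\epsilon$.
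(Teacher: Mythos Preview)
Your proof is correct and follows essentially the same approach as the paper: both construct the free summand $F$ by lifting a $\Z/p$-basis of $pM$ and then exhibit the complement as a copy of $\Gamma M$. The only cosmetic difference is that the paper identifies $M/F \cong \Gamma M$ via the zig-zag lemma applied to $0 \to C_*(F) \to C_*(M) \to C_*(M/F) \to 0$ and then splits $M \twoheadrightarrow M/F$, whereas you work internally by choosing a vector-space complement $V$ of $pM$ inside $\Ann_M p$ and verifying $M = F \oplus V$ directly.
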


\begin{proof}
Let $M$ be our module.  Choose a $\Z/p$-basis $\{v_i\}$ for $pM$.
For each $i$, there exists a $w_i\in M$ such that $pw_i=v_i$.
Let $F$ be the submodule generated by the $w_i$'s.  One readily checks
that the $w_i$'s are a free basis for $F$.

The inclusion $\Ann_M p \inc M$ induces a map $(\Ann_M p)/pM
\ra M/F$.  We claim this is an isomorphism.  To see this, observe that
we have a short exact sequence of chain complexes
\[ 0 \ra C_*(F) \ra C_*(M) \ra C_*(M/F) \ra 0
\]
and $C_*(F)$ is exact, because $F$ is free.  By the zig-zag lemma, one
has $\Gamma(M)\iso \Gamma(M/F)$.  But on $M/F$ multiplication by $p$
is the zero map, since $F\supseteq pM$; so $\Gamma(M/F)=M/F$.

Finally, as $M/F$ is a $\Z/p$-vector space we can choose a basis
$\{\alpha_j\}$.  Let $\pi\colon M \ra M/F$ be the quotient map. For
any $j$, there exists a $\beta_j\in M$ such that
$\pi(\beta_j)=\alpha_j$ and $p\beta_j=0$ (this is really just the zig-zag
lemma again).  This gives us a splitting for the exact sequence $0\ra
F\inc M \ra M/F\ra 0$ by sending $\alpha_j$ to $\beta_j$.
\end{proof}

\begin{remark}
Note that by the above result $M\he \Gamma(M)$ in $\Stmod(R)$, since
free modules are contractible.
\end{remark}

Let $i\colon \Vect \inc \cM$ be the map which regards every vector
space as an $R$-module via the projection $R\ra k$.  This is the
inclusion of a full subcategory.  Note that the composite $\Gamma
\circ i$ is isomorphic to the identity.

It is easy to see that if $f\colon J\ra K$ is a stable homotopy
equivalence then $\Gamma(f)$ is an isomorphism (using that $\Gamma$
takes free modules to zero).  So one has the diagram
\[ \xymatrix{
\Vect\ar[r]^i & \cM \ar[d]\ar[r]^\Gamma & \Vect \\
 & \Ho(\cM)\ar@{.>}[ur]
}
\]
where the dotted arrow is the unique extension of $\Gamma$ (which we
will also call $\Gamma$, by abuse).  Since every object in $\Ho(\cM)$
is isomorphic to a $k$-vector space, it is clear that $\Ho(\cM) \ra
\Vect$ is bijective on isomorphism classes.  It is also clear from the
diagram that $\Ho(\cM)\ra \Vect$ is surjective on hom-sets.  We will
prove below that it is actually an equivalence.

\subsection{Homotopies}
In model categories it is more common to deal with homotopies in terms
of cylinder objects rather than path objects, as the former is more
familiar.  In stable module categories it seems to be easier to deal
with path objects, however.

If $M$ is an $R$-module, let $F\ra M$ be any surjection of a free
module onto $M$.  Write $PM=M\oplus F$.  Let $i\colon M\inc PM$ be the
inclusion.  Define $\pi\colon PM \ra M\oplus M$ by having it be the
diagonal on the first summand of $PM$, and on the second summand it is
the composite $F\ra M \inc M\oplus M$, where the second map is the
inclusion into the second factor.  So the composite $M\ra PM \ra
M\oplus M$ is the diagonal, $M\ra PM$ is a trivial cofibration, and
$PM \ra M\oplus M$ is a fibration.  Therefore $PM$ is a very good path
object for $M$ in the sense of \cite{Q,hovey-book}.

It follows that for any $R$-module $J$, the natural map
\[ \coeq\Bigl ( \cM(J,PM) \dbra \cM(J,M)\Bigr  ) \ra \Ho(\cM)(J,M) \]
is an isomorphism.  The following result is immediate:

\begin{prop}
For any $k$-vector spaces $V$ and $W$, the map $\Vect(V,W) \ra
\Ho(\cM)(V,W)$ is an isomorphism.
\end{prop}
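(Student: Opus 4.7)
The plan is to use the explicit description of $\Ho(\cM)(V,W)$ as a coequalizer, derived from the very good path object $PW = W \oplus F$ constructed just above, and then show the two parallel arrows produce no nontrivial identifications when the source and target are $k$-vector spaces. First, note that for $V$ a $k$-vector space, the $R$-module structure on $V$ factors through the quotient $R \to k$, so any $R$-linear map $V \to W$ between such objects is automatically $k$-linear; hence $\cM(V,W) = \Vect(V,W)$, and the natural map $\Vect(V,W) \to \Ho(\cM)(V,W)$ is just the projection from $\cM(V,W)$ onto the coequalizer displayed above.

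Next I would unwind the two projections $\pi_1,\pi_2 \colon PW \to W$ in coordinates. Writing $PW = W \oplus F$ with $F \twoheadrightarrow W$ the chosen surjection $\alpha$, the composite $W \to PW \to W \oplus W$ being the diagonal forces $\pi_1(w,f) = w$ and $\pi_2(w,f) = w + \alpha(f)$. Consequently, two maps $g,g' \colon V \to W$ become identified in the coequalizer if and only if $g' - g$ factors through the free $R$-module $F$ via $\alpha$ — which recovers the stable homotopy relation in a very concrete form.

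The crux is then to prove that the only $R$-module map between $k$-vector spaces which factors through any free $R$-module is the zero map. Suppose $\phi \colon V \to W$ decomposes as $V \xrightarrow{s} F' \xrightarrow{t} W$ with $F'$ a free $R$-module. Since $pV = 0$, every element $s(v)$ is $p$-torsion in $F'$; because $F'$ is free over $R = \Z/p^2$, its $p$-torsion submodule is exactly $pF'$. Hence $s(V) \subseteq pF'$, and applying $t$ gives $\phi(V) \subseteq t(pF') = p \cdot t(F') \subseteq pW = 0$.

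Combining these steps, the coequalizer collapses to $\cM(V,W) = \Vect(V,W)$, and the map to $\Ho(\cM)(V,W)$ is an isomorphism. I expect no serious obstacle: the path-object presentation reduces the problem to a clean module-theoretic assertion, and the key observation — that $p$-torsion in a free $\Z/p^2$-module is precisely the image of $p$ — makes the factorization argument essentially immediate.
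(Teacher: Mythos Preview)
Your proposal is correct and follows essentially the same approach as the paper: both use the path-object coequalizer presentation of $\Ho(\cM)(V,W)$ and reduce to the observation that the only map $V\to W$ of $k$-vector spaces factoring through a free $R$-module is zero. Your argument is simply a more explicit version---writing out the two projections $\pi_1,\pi_2$ in coordinates and spelling out the $p$-torsion computation in a free $\Z/p^2$-module---of what the paper compresses into the single sentence ``The main point is that the only map $V\to W$ which factors through a free module is the zero map.''
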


\begin{proof}
The two arrows $\cM(V,PW)\dbra \cM(V,W)$ are checked to be the same.
The main point is that the only map $V\ra W$ which factors through a
free module is the zero map.
\end{proof}

\begin{cor}
The functors $i\colon \Vect \ra \Ho(\cM)$ and $\Gamma\colon \Ho(\cM)
\ra \Vect$ are an equivalence of categories.
\end{cor}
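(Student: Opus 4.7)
The plan is to deduce the corollary by verifying the two standard criteria for an equivalence of categories: that $i$ is fully faithful, and that $i$ is essentially surjective. Since the earlier observation $\Gamma \circ i \iso \id_{\Vect}$ is already in hand, once $i$ is shown to be an equivalence the functor $\Gamma$ is automatically a quasi-inverse, and no separate check of $i \circ \Gamma \iso \id_{\Ho(\cM)}$ is required.

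Full faithfulness is precisely the content of the preceding Proposition, which asserts that for any two $k$-vector spaces $V$ and $W$ the natural map $\Vect(V,W) \to \Ho(\cM)(V,W)$ is a bijection.

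For essential surjectivity, I would take an arbitrary $M \in \cM$ and invoke Lemma~\ref{le:Gamma} to obtain a (non-canonical) isomorphism $M \iso \Gamma M \oplus F$ in $\cM$ with $F$ a free $R$-module. Since free modules are contractible in $\Stmod(R)$, passing to $\Ho(\cM)$ yields $M \iso \Gamma M = i(\Gamma M)$, so every object of $\Ho(\cM)$ lies in the essential image of $i$.

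There is really no obstacle to overcome at this stage; all of the substantive input has already been supplied. Lemma~\ref{le:Gamma} provides essential surjectivity, and the preceding proposition provides full faithfulness—the latter being where the genuine content sits, namely that the two parallel arrows $\cM(V, PW) \dbra \cM(V, W)$ coincide because no nonzero map between $k$-vector spaces factors through a free $R$-module. The corollary itself is then a purely formal assembly of these pieces, and the analogous statement for $\Me$ in place of $\cM$ goes through by the identical argument since Lemma~\ref{le:Gamma} and the path object construction work verbatim over $\Re$.
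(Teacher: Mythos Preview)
Your proposal is correct and follows essentially the same approach as the paper: the paper has already observed (in the Remark after Lemma~\ref{le:Gamma} and the discussion preceding the Proposition) that every object of $\Ho(\cM)$ is isomorphic to one in the image of $i$, and the Proposition supplies full faithfulness; the corollary is then stated without further proof, exactly because the formal assembly you describe is all that remains.
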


For later use we record the following:

\begin{prop}
\label{pr:monos}
Every injection in $\cM$ is isomorphic to a direct sum of injections of the
following forms:
\[ 0 \ra k, \quad 0 \ra R,\quad \id\colon k\ra k,\quad  \id\colon R\ra
R,
\quad \text{and}\quad  p\colon k\ra R.
\]
\end{prop}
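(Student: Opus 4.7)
The plan is to iteratively peel off basic summands of an injection $f\colon A \inc B$ using Lemma~\ref{le:Gamma} on both $A$ and $B$, together with the self-injectivity of $R$ (which follows from the Frobenius property noted in the introduction and implies that free $R$-modules are injective). First I would split off the free part of $A$: write $A = F_A \oplus V_A$ via Lemma~\ref{le:Gamma}, with $F_A$ free on a basis $\{w_i\}$ and $V_A$ a $k$-vector space. Injectivity of $f$ shows that $\{f(w_i)\}$ is $R$-linearly independent, so $f(F_A)$ is free on these elements. Since free $R$-modules are injective, the $R$-linear map $\phi\colon f(A)\to f(F_A)$ defined by $\phi|_{f(F_A)} = \id$ and $\phi|_{f(V_A)} = 0$ extends to a retraction $\tilde\phi\colon B \to f(F_A)$. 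Setting $B_0 = \ker\tilde\phi$ gives $B = f(F_A)\oplus B_0$ with $f(V_A)\subseteq B_0$, and the restriction $F_A\to f(F_A)$ decomposes as a direct sum of $\id\colon R\to R$ pieces. The remaining injection to analyze is $f|_{V_A}\colon V_A \inc B_0$, whose image $W := f(V_A)$ lies in $\Ann_{B_0} p$.

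Next I would construct a Lemma~\ref{le:Gamma}-type decomposition $B_0 = F' \oplus V'$ compatible with $W$, meaning $W = (W\cap pF') \oplus (W\cap V')$. Let $W' = W\cap pB_0$ and choose a $k$-complement $W''\subseteq W$, so that $W''\cap pB_0 = 0$. Choose a $k$-basis of $pB_0$ whose first elements form a basis of $W'$, lift each such basis element $u$ to some $\tilde u\in B_0$ with $p\tilde u = u$, and let $F'$ be the $R$-span of these lifts; as in the proof of Lemma~\ref{le:Gamma}, $F'$ is free on the $\tilde u$'s and $pF' = pB_0$. Extend $W''$ to a $k$-subspace $V'\subseteq \Ann_{B_0} p$ mapping isomorphically onto $\Ann_{B_0} p / pB_0$. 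A dimension/lifting argument like the end of the proof of Lemma~\ref{le:Gamma} shows $B_0 = F'\oplus V'$, while $V'\cap pB_0 = 0$ and $W'\subseteq pF'$ yield $W\cap pF' = W'$ and $W\cap V' = W''$.

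Finally I would assemble the decomposition. The splitting $W = W'\oplus W''$ pulls back to $V_A = V_A'\oplus V_A''$ with $V_A' = f^{-1}(W')$ and $V_A'' = f^{-1}(W'')$. For each basis vector $v_\alpha$ of $V_A'$, $f(v_\alpha)$ has the form $p\tilde u_\alpha$ for a free generator $\tilde u_\alpha$ of $F'$, producing a $p\colon k\to R$ summand; the remaining free generators of $F'$ give $0\to R$ summands. For $V_A''\to V'$, the map is the inclusion $W''\subseteq V'$, and extending a basis of $W''$ to one of $V'$ gives $\id\colon k\to k$ summands along with $0\to k$ summands. Combined with the $\id\colon R\to R$ pieces from the first step, this yields the claimed decomposition.

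The main obstacle is arranging the compatibility $W = (W\cap pF')\oplus (W\cap V')$: a generic Lemma~\ref{le:Gamma} decomposition of $B_0$ need not have this property, so one must tailor the decomposition to $W$ by choosing the basis of $pB_0$ to start with a basis of $W\cap pB_0$ and the complement $V'$ to contain a lift of $W/(W\cap pB_0)$.
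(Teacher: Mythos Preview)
Your proof is correct. The first reduction---splitting off the free part $F_A$ of the domain via the injectivity of $f(F_A)$---is exactly what the paper does. The second phase, however, is organized differently. The paper takes an \emph{arbitrary} decomposition $N=G\oplus W$ of the target via Lemma~\ref{le:Gamma}, splits off $[V\cap G\hookrightarrow G]\oplus[V/(V\cap G)\hookrightarrow W]$ using the injectivity of $G$, and then needs a further injectivity-based splitting step to handle $U=V\cap G\hookrightarrow G$ (embedding $U$ into a free $H$ with image $pH$, extending $H\to G$, and splitting once more). You instead build the decomposition $B_0=F'\oplus V'$ to be \emph{compatible with $W$ from the outset}, by choosing the basis of $pB_0$ to begin with a basis of $W\cap pB_0$ and choosing $V'$ to contain the complement $W''$. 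This lets you read off the $p\colon k\to R$, $0\to R$, $\id\colon k\to k$, and $0\to k$ summands in one pass, without the third splitting. The trade-off: the paper's argument is more uniform (the same ``split via an injective summand'' trick applied three times), while yours front-loads the work into a tailored construction of $F'$ and $V'$ and is one step shorter. Both rely on the same ingredients---Lemma~\ref{le:Gamma} and self-injectivity of $R$---and neither is clearly simpler overall.
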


\begin{proof}
Let $j\colon M\inc N$ be an injection of $R$-modules.  We already know we can
write $M\iso F\oplus V$ for some free module $F$ and some $k$-vector
space $V$.  So up to isomorphism we can assume $M=F\oplus V$, and that
$M$ is a submodule of $N$.
Consider the map of exact sequences
\[ \xymatrix{0\ar[r] & F \ar[d]_\iso\ar[r] & M \ar@{ >->}[d]\ar[r] & M/F
\ar@{ >->}[d]\ar[r] & 0 \\
0 \ar[r] & F \ar[r] & N \ar[r] & N/F \ar[r] & 0
}
\]
The evident projection $\pi\colon M\ra F$ gives a splitting for the top exact
sequence.  Using that $F$ is injective, we can choose
a map $N\ra F$ whose restriction to $M$ is $\pi$.
 This gives a compatible splitting for the bottom
exact sequence, showing that
\[ [M\llra{j} N] \iso [F \lra F] \oplus [M/F \lra N/F].
\]
The map $\id\colon F\ra F$ is isomorphic to a
direct sum of maps $\id\colon R\ra R$.  So now replacing $M$ with
$M/F$ and $N$ with $N/F$, we can assume that the domain of $j$ is a
$k$-vector space $V$.

So now assume $j$ is a map $V\ra N$, where $V$ is a $k$-vector space.
We again know that $N$ splits as $G\oplus W$ for some free module $G$
and some $k$-vector space $W$; so up to isomorphism we can assume
$N=G\oplus W$ and that $V$ is a submodule of $N$.

Consider the map of exact sequences
\[ \xymatrix{0\ar[r] & V\cap G \ar[d]\ar[r] & V \ar@{
    >->}[d]\ar[r] & V/(V\cap G)
\ar@{ >->}[d]\ar[r] & 0 \\
0 \ar[r] & G \ar[r] & N \ar[r] & W \ar[r] & 0.
}
\]
Since $V\cap G\inc V$ is an inclusion of vector spaces, we can choose
a splitting $\pi$.  And then again using that $G$ is injective, we can
choose a compatible splitting $N\ra G$.  So this shows
\[ [V\llra{j} N]\iso [V\cap G \inc G] \oplus [V/(V\cap G) \lra W].
\]
The second map on the right is an inclusion of $k$-vector spaces, and so up to
isomorphism it is a direct sum of maps $\id\colon k\ra k$ and $0\ra
k$.  So we are reduced to analyzing the first map on the right, which
has the form $U\ra G$ where $U$ is a $k$-vector space and $G$ is free.

Up to isomorphism we have that $U$ is a direct sum of $k$'s.  Using
the inclusion $k\ra R$ sending $1\mapsto p$, we therefore obtain an
embedding $U\inc H$ where $H$ is a free module and the image of $U$ is
$pH$.  Since $G$ is injective, there is a map $H\ra G$ extending $U\inc
G$.  It is easy to see that $H\ra G$ is also an injection.

So finally, consider the map of exact sequences
\[ \xymatrix{0\ar[r] & U \ar[d]\ar[r] & U \ar@{
    >->}[d]\ar[r] & 0
\ar@{ >->}[d]\ar[r] & 0 \\
0 \ar[r] & H \ar[r] & G \ar[r] & G/H \ar[r] & 0.
}
\]
The bottom row is split (since $H$ is injective), and so there is a
splitting $G/H \ra G$ which is trivially compatible with the splitting
$0\ra U$ of the top row.  So this shows
\[ [U\ra G]\iso [U\ra H] \oplus [0\ra G/H].
\]
The first map on the right is isomorphic to a direct sum of maps $k\ra
R$ (by construction).  Since $G/H$ is a direct summand of the free
module $G$, it is itself free.  So the second map on the right is
isomorphic to a direct sum of maps $0\ra R$, and we are done.
\end{proof}

\subsection{The case of $\Re$-modules}
All the results in the previous section have analogs for $\Me$, and
the proofs are essentially the same except replacing all occurences of
``$p$'' by ``$\epsilon$''.  For instance, if $M$ is an $\Re$-module
then we define $\Gamma(M)=(\Ann_M \epsilon)/\epsilon M$.
If anything, the proofs are slightly easier in the $\Me$ case because
every module is also a $k$-vector space.

\subsection{Equivalences}

To say that two model categories $\cC$ and $\cD$ are Quillen
equivalent means that there is a zig-zag
\[ \cC=\cC_1 \we \cC_2 \bwe \cC_3 \we \cdots \bwe \cC_n=\cD \]
of Quillen equivalences between $\cC$ and $\cD$.  (Here we are
regarding a Quillen pair $L\colon\cM\adjoint \cN\colon R$ as a map of
model categories in the direction of the left adjoint.)  The derived
functors of each Quillen equivalence induce an equivalence of the
respective homotopy categories, and by composing these equivalences we
obtain an equivalence $\Ho(\cC)\he \Ho(\cD)$.

It is sometimes confusing to have $k$ denote both an $R$-module and an
$\Re$-module.   In these cases we will write $\ke$ to indicate $k$
thought of as an $\Re$-module.

\begin{prop}
\label{pr:ktok}
Suppose that one has a zig-zag of Quillen equivalences between $\cM$
 and $\Me$.  Then under the derived equivalence of homotopy
 categories, the object $k\in \Ho(\cM)$ maps to an object isomorphic
 to $\ke \in \Ho(\Me)$.
\end{prop}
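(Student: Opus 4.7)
The plan is to exploit the equivalences $\Ho(\cM) \simeq \Vect$ and $\Ho(\Me) \simeq \Vect$ established just above, and then pin down the image of $k$ by a dimension count on endomorphism groups. The key fact is that the given zig-zag of Quillen equivalences composes to a derived equivalence $\Phi \colon \Ho(\cM) \to \Ho(\Me)$, which is in particular fully faithful.

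Given such a $\Phi$, I would set $X = \Phi(k) \in \Ho(\Me)$. By the $\Me$-analog of the Corollary, $i_\epsilon \colon \Vect \to \Ho(\Me)$ is an equivalence of categories, so $X$ is isomorphic in $\Ho(\Me)$ to $i_\epsilon(V)$ for the $k$-vector space $V = \Gamma_\epsilon(X)$. Full faithfulness of $\Phi$ then yields an isomorphism
\[ \Ho(\cM)(k,k) \xrightarrow{\cong} \Ho(\Me)(X,X). \]
By the preceding proposition (and its $\Me$-analog), the left-hand side is isomorphic to $\Vect(k,k) \cong k$, of $k$-dimension $1$, while the right-hand side is isomorphic to $\End_k(V)$, of $k$-dimension $(\dim_k V)^2$. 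Equating dimensions forces $\dim_k V = 1$, so $V \cong k$ and hence $X \cong i_\epsilon(k) = \ke$, which is the desired conclusion.

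There is no substantial obstacle. The only thing requiring verification is that the zig-zag really does induce a well-defined fully faithful functor on homotopy categories that tracks the object $k$, but this is exactly what is recorded in the discussion of Quillen equivalences immediately preceding the proposition—each Quillen equivalence in the zig-zag gives a fully faithful derived equivalence on homotopy categories, and composites of fully faithful functors are fully faithful.
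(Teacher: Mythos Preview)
Your proof is correct and follows essentially the same idea as the paper: both arguments characterize $k$ (up to isomorphism) as the unique object whose endomorphism set has cardinality $p$, with the paper phrasing this directly as a count of elements and you phrasing it as a $k$-dimension count on $\End_k(V)$. One small remark: full faithfulness a priori only gives a bijection of hom-sets, so to compare $k$-dimensions you should either count cardinalities (as the paper does) or note that the derived equivalence is additive and that a group isomorphism between $\Z/p$-vector spaces is automatically $k$-linear.
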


\begin{proof}
Recall that $\Ho(\cM)$ and $\Ho(\Me)$ are both isomorphic to the
category $\Vect$ of $k$-vector spaces.  There is only one object (up
to isomorphism) in this category whose set of endomorphisms has
exactly $p$ elements.
\end{proof}


\section{Stable module categories and differential graded modules}

One of our goals is to show that the model categories $\cM$ and $\Me$
are each Quillen equivalent to the model category of modules over
certain dgas.  In this section we set up the basic machinery for these
Quillen equivalences, working in slightly greater generality.

\medskip

Let $T$ be a Frobenius ring; a ring such that the projective and
injective $T$-modules coincide.  Consider $\Stmod(T)$, the stable
model category on $T$-modules from~\cite[2.2.12]{hovey-book}.  Here
the cofibrations are the injections, the fibrations are the
surjections, and the weak equivalences are the stable homotopy equivalences
as described in the introduction.
For two $T$-modules $M$ and $N$, denote by $[M,N]$ the stable homotopy
classes of maps.

The goal of this section is to show that $\Stmod(T)$ is Quillen
equivalent to a model category of dg-modules over a \dga if
$\Stmod(T)$ has a {\em compact, (weak) generator} (see below).  This
extends to the model category level certain triangulated equivalences
from~\cite{keller-dg}.

\begin{definition}
An object $M$ in $\Stmod(T)$ is \dfn{compact} if $\oplus_i[M, N_i] \to
[M, \oplus_i N_i]$ is an isomorphism, for every collection
of objects $N_i$.  $M$ is a \dfn{(weak) generator}
if $[M, N]_* = 0$ implies $N$ is weakly equivalent to $0$.
\end{definition}

\begin{lemma}
\label{le:compact}
If $M$ is stably equivalent to a finitely generated module, then
$M$ is compact in $\Stmod(T)$.
\end{lemma}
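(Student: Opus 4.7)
The plan is to first reduce to the case where $M$ itself is finitely generated, and then to verify compactness at the level of ordinary $T$-module homomorphisms before descending modulo the stable homotopy relation. As a preliminary remark, in $\Stmod(T)$ every object is both cofibrant and fibrant, since cofibrations are injections and fibrations are surjections. Consequently $[M,N]$ is literally the quotient of $\Hom_T(M,N)$ by the subgroup $P(M,N)$ of maps that factor through some projective module, and the functor $[M,-]$ depends only on the stable equivalence class of $M$. Since coproducts in $\Stmod(T)$ are just direct sums of modules, I may replace $M$ with a stably equivalent finitely generated module and assume without loss of generality that $M$ itself is finitely generated.

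For such an $M$, any homomorphism $f \colon M \to \oplus_i N_i$ has image contained in a finite subcoproduct (apply $f$ to a finite generating set of $M$), which gives the classical isomorphism $\oplus_i \Hom_T(M, N_i) \iso \Hom_T(M, \oplus_i N_i)$.

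To descend to stable homotopy classes I would check that this isomorphism matches up the subgroups $P(M, -)$ on the two sides. If $f = g \circ h$ with $h \colon M \to P$ and $P$ projective, then each component $\pi_i \circ f$ also factors through $P$. Conversely, if each component $f_i$ (only finitely many being nonzero) factors as $g_i \circ h_i$ through a projective $P_i$, then $f$ itself factors through the projective $\oplus_i P_i$ via the map with components $h_i$ followed by $\oplus_i g_i$. Applying the short five lemma to the resulting commutative diagram of short exact sequences $0 \to P(M, -) \to \Hom_T(M, -) \to [M, -] \to 0$ then yields the desired isomorphism $\oplus_i [M, N_i] \iso [M, \oplus_i N_i]$. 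The argument is essentially routine; the only obstacle worth calling out is verifying that the identification of $\Hom$-groups respects the stable homotopy relation on both sides, which is exactly what the last step accomplishes.
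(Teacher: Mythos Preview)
Your proof is correct and is precisely the routine verification the paper has in mind: the paper's own proof simply states ``It is enough to check that every finitely-generated module is compact, and we leave this to the reader,'' and you have supplied those details in the natural way. One small terminological quibble: what you invoke at the end is really just the observation that an isomorphism of abelian groups carrying one subgroup onto another induces an isomorphism on quotients (or the ordinary five lemma), rather than the ``short five lemma'' as usually stated; but the mathematics is fine.
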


\begin{proof}
It is enough to check that every finitely-generated module is compact,
and we leave this to the reader.
\end{proof}

It follows from results of \cite{D, ds2, ss2, dg-s} that if an additive, stable, combinatorial
model category has a compact weak generator then it is Quillen equivalent to the model category of
modules over a dga (perhaps through a zig-zag of Quillen equivalences). Rather than invoke the
heavy machinery from those sources, however, it is easier in the case of $\Stmod(T)$ to just
establish the Quillen equivalence directly.  We do this next.

Define the endomorphism \dga associated to any object in
$\Stmod(T)$ as follows.  First, we need to fix projective covers and injective
hulls for each $T$-module.  To be specific we use the functorial
cofibrant and fibrant replacements coming from the small object
argument and the cofibrantly-generated model category
structure~\cite[2.1.14]{hovey-book}.

\begin{definition}
Define $I(M)$ by functorially factoring $M \to 0$ as a composite $M
\cofib I(M) \trfib 0$, a cofibration followed by a trivial fibration.
Similarly, define $P(M)$ by functorially factoring $0\to M$ as
$0\trcof P(M)\fib M$, a trival cofibration followed by a fibration.

Define $\Sigma M$ to be the cokernel of $M \to I(M)$.  Define $\Omega
M$ to be the kernel of $P(M) \to M$.  Let $[M, N]_*$ be the graded
stable homotopy classes of maps in $\Ho(\Stmod(T))$, so that
$[M,N]_n\iso [\Sigma^n M, N] \iso [ M, \Omega^n N]$.
\end{definition}

To move from the setting of $T$-modules to differential graded modules
we consider complete resolutions.  A \dfn{complete resolution} of $M$
is an acyclic $\bZ$-graded chain complex $P$ of projective (also
injective) $T$-modules together with an isomorphism between $M$ and
$Z_{-1}P$, the cycles of $P$ in degree $-1$.  Considering $M$ and
$\Omega M$ as complexes concentrated in degree zero, observe that
there is a canonical map of complexes $\pi\colon P \to M$ obtained
from the projection $P_0\ra Z_{-1}P$.  One can make a map of complexes
$i\colon \Omega M \to P$ by lifting $P(M)\ra M$ to a map $P(M)\ra
P_0$, but this lifting is not canonical; however, the map $\Omega M
\ra P$ is canonical up to chain homotopy.

One way to form a complete resolution is to take $P_{n}$ to
be $I(\Sigma^{-(n+ 1)} M)$ for $n < 0$ and for $n \geq 0$ to take
$P_{n}$ to be $P(\Omega^{n} M)$ with the obvious differentials:
\[ \xymatrixcolsep{0.8pc}\xymatrix{
& P(\Omega M) \ar@{->>}[dr]\ar[rr] && P(M) \ar@{->>}[dr]\ar[rr] && I(M)
\ar@{->>}[dr]\ar[rr] && I(\Sigma M) \ar@{->>}[dr]\ar[rr] && {} \\
\Omega^2M \ar[ur]
&&\Omega M \ar@{ >->}[ur] && M\ar@{ >->}[ur] && \Sigma M\ar@{ >->}[ur]
&&  \Sigma^2 M
}
\]
Denote this particular complete resolution by $\PM$.

\begin{definition}
Let $\Ch_T$ be the category of $\bZ$-graded chain complexes of $T$-modules.
Given $X, Y$ in $\Ch_T$ define $\Hom(X,Y)$ in $\Ch_{\bZ}$ as the
complex with $\Hom(X,Y)_n = \prod_k \hom_T(X_k, Y_{n+k})$, the set of degree $n$
maps (ignoring the differentials).    For $f=(f_k)\in \Hom(X,Y)_n$
define $df\in \Hom(X,Y)_{n-1}$ to be the tuple whose component in
$\hom_T(X_k,Y_{n+k-1})$ is $d_Yf_k+(-1)^{n+1}f_{k-1}d_X$.
Notice that $\Hom(X,X)$ is a differential graded algebra.
\end{definition}

We define $\EM = \Hom(\PM, \PM)$, the {\em endomorphism \dgap} of $M$.
It follows from Lemma~\ref{lem-p-p} below that $H_*\EM \iso [M,M]_*$,
the graded ring of stable homotopy classes of self maps of $M$.  We
denote by $\Mod\EM$ the category of right differential graded modules
over the \dga $\EM$.  This has a model category structure where the
weak equivalences are the quasi-isomorphisms and the fibrations are
the surjections.

Note that if $N$ is a $T$-module then $\Hom(\PM,N)$ is a
right module over $\EM$.

\begin{thm} \label{main-thm} If $M$ is a compact, weak generator of
$\Stmod(T)$ then there is a Quillen equivalence $\Mod\EM \ra \Stmod(T)$
where the right adjoint is given by
$$\Hom(\PM, -)\mc \Stmod(T) \to \Mod\EM.$$
\end{thm}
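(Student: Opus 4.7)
The plan is to construct a left adjoint $L\colon \Mod\EM \to \Stmod(T)$ to $\Hom(\PM,-)$, verify that the pair is Quillen, and then establish the Quillen equivalence condition. Since $Z_{-1}(\PM)\iso M$ by the construction of the complete resolution, the natural candidate is $L(X)=Z_{-1}(X\otimes_{\EM}\PM)$, where one regards a $T$-module $N$ as a complex concentrated in degree $0$; the standard hom--tensor adjunction for chain complexes then produces the required natural bijection on morphism sets.

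To see that $(L,\Hom(\PM,-))$ is a Quillen pair, fibrations in both model categories are surjections, and since each $\PM_n$ is projective (as $T$ is Frobenius), $\Hom(\PM,-)$ sends surjections to surjections. Moreover, any stably trivial map factors through a projective, so its image under $\Hom(\PM,-)$ is null-homotopic; hence $\Hom(\PM,-)$ takes stable equivalences to quasi-isomorphisms, and in particular preserves acyclic fibrations. This makes $\Hom(\PM,-)$ right Quillen.

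For the Quillen equivalence, I would study the derived unit $X \to \Hom(\PM, L(X^c))$, where $X^c$ denotes a cofibrant replacement. Let $\T\subseteq\Ho(\Mod\EM)$ be the class of objects on which this derived unit is a weak equivalence. By Lemma~\ref{lem-p-p} the identification $H_*\EM\iso [M,M]_*$ shows $\EM\in\T$. The class $\T$ is closed under shifts (both categories are stable), under homotopy cofibers (both derived functors are exact), and---using that $M$ is compact in $\Stmod(T)$---under arbitrary coproducts. Since $\EM$ generates $\Ho(\Mod\EM)$ as a triangulated category with infinite coproducts, $\T=\Ho(\Mod\EM)$. For the counit, use that $M$ is a weak generator: the cofiber $C$ of the derived counit at $N$ satisfies $\Hom(\PM,C)\simeq 0$ as an $\EM$-module, equivalently $[M,C]_*=0$ in $\Stmod(T)$, and hence $C\simeq 0$ by generation.

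The main obstacle is verifying closure of $\T$ under arbitrary coproducts: one must confirm that for cofibrant $\EM$-modules $X$ the construction $X\mapsto Z_{-1}(X\otimes_{\EM}\PM)$ is compatible with coproducts up to weak equivalence, which is where compactness of $M$ is essential, translating into the fact that $\Hom(\PM,-)$ commutes with coproducts in the homotopy category. A secondary issue is ensuring that $L$ preserves cofibrations and trivial cofibrations in the first place; once the adjunction and the good behavior of $\Hom(\PM,-)$ on fibrations and weak equivalences are in hand, this follows formally by adjunction.
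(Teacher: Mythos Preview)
Your overall strategy matches the paper's closely: verify the Quillen pair by checking that the right adjoint preserves fibrations (levelwise projectivity of $\PM$) and all weak equivalences, then show the derived unit and counit are isomorphisms via a localizing-subcategory argument reduced to the generators $\EM$ and $M$.  There is, however, a concrete error in your formula for the left adjoint.  The inclusion $i_0\colon \Stmod(T)\to\Ch_T$ of a module as a complex concentrated in degree~$0$ has left adjoint $c_0(C)=C_0/\im(C_1)$, \emph{not} $Z_{-1}(C)=\ker(C_{-1}\to C_{-2})$: the hom--tensor adjunction you invoke yields
\[
\Hom_{\EM}\bigl(X,\Hom(\PM,N)\bigr)\;\cong\;\Hom_{\Ch_T}\bigl(X\otimes_{\EM}\PM,\,i_0N\bigr)\;\cong\;\Hom_T\bigl(c_0(X\otimes_{\EM}\PM),\,N\bigr).
\]
The functors $c_0$ and $Z_{-1}$ happen to agree on \emph{acyclic} complexes (in particular on $\PM$ itself, both returning $M$, which is what misled you), but $X\otimes_{\EM}\PM$ is not acyclic for general $X$, and an adjunction must hold on the nose rather than only on cofibrant objects.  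With $Z_{-1}$ replaced by $c_0$ your argument goes through and is essentially the paper's.

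Two smaller points of comparison.  First, the paper justifies preservation of weak equivalences by invoking the natural isomorphism $H_k\Hom(\PM,N)\cong[M,N]_k$ of Lemma~\ref{lem-p-p}(b); your ``stably trivial map factors through a projective'' sketch can be made to work but as written conflates stable equivalences with stably null maps.  Second, for the counit the paper checks it directly at $M$---using Lemma~\ref{lem-p-p}(c) to recognize $\EM$ as a cofibrant replacement of $\Hom(\PM,M)$---and then appeals to the localizing subcategory generated by $M$; your cofiber-plus-weak-generator argument is a valid alternative, implicitly using the triangle identity to deduce $\Hom(\PM,C)\simeq 0$ from the unit isomorphism already established.
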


The proof of this result will be given below.  We can better
understand the adjoint functors in the Quillen equivalence by
splitting the adjunction into two pieces:
\[ \xymatrix{
\Mod\EM \ar@<0.5ex>[r] & \Ch_T \ar@<0.5ex>[r]^-{c_0}\ar@<0.5ex>[l] &
\Stmod(T). \ar@<0.5ex>[l]^-{i_0}
}
\]
In the first adjunction, the functors are just tensor and Hom: so the
left adjoint sends a right $\EM$-module $Q$ to $Q\tens_{\EM} \PM$.
In the second adjunction, the right adjoint $i_0$ sends a module $N$ to the
chain complex with $N$ concentrated in degree $0$.  So its left
adjoint $c_0$ sends a chain complex $P$ to $P_0/\im(P_1)$.
Thus, the left adjoint in our Quillen equivalence is the functor
\[ Q \mapsto c_0(Q\tens_{\EM} \PM).
\]
Note that this functor sends $\EM$ to $M$.

We need the following statements to prove the theorem.

\begin{lemma}\label{lem-p-p}
\label{3.6}
Let $M$ and $N$ be $T$-modules and let $P$ be a complete resolution of
$M$.
\begin{enumerate}[(a)]
\item There are isomorphisms $H_k\Hom(N,P) \iso [N, \Omega
 M]_k$, natural in $N$, for all $k\in \Z$.
\item There are isomorphisms
$H_k\Hom(P,N) \iso [M, N]_k$, natural in $N$, for all $k\in \Z$.
\item
The map $\pi_*\colon \Hom(P,P) \to \Hom(P, M)$, induced by the
map of complexes
$\pi\colon P\ra M$, is a quasi-isomorphism.
\item The map $i_*\colon \Hom(P,P) \to \Hom(\Omega M,P)$, induced by the
map of complexes
$i\colon \Omega M\ra P$, is a quasi-isomorphism.
\end{enumerate}
\end{lemma}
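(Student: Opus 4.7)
Parts (a) and (b) follow by directly identifying cycles and boundaries in the Hom-complexes. For (a), a cycle $f\in\Hom(N,P)_k$ is a module map $N\to P_k$ with $d_P\circ f=0$; hence $f$ factors through the cycles $Z_kP$, which coincide with $\Omega^{k+1}M$ (by acyclicity of $P$ together with the definition of the complete resolution). Such a cycle is a boundary iff the induced map $N\to \Omega^{k+1}M$ lifts through the surjective differential $d\colon P_{k+1}\twoheadrightarrow\Omega^{k+1}M$; since $P_{k+1}$ is projective-injective, this happens exactly when the map factors through some projective, i.e.\ is stably trivial. Therefore $H_k\Hom(N,P)\iso[N,\Omega^{k+1}M]=[N,\Omega M]_k$, naturally in $N$. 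Part (b) is the formal dual: cycles in $\Hom(P,N)_n$ are maps $P_{-n}\to N$ vanishing on $\Omega^{-n+1}M=\im(d_P\colon P_{-n+1}\to P_{-n})$, which descend uniquely to maps $\Omega^{-n}M = P_{-n}/\Omega^{-n+1}M\to N$; boundaries correspond to maps factoring through the inclusion $\Omega^{-n}M\hookrightarrow P_{-n-1}$, and injectivity of $P_{-n-1}$ makes this equivalent to factoring through an arbitrary injective, i.e.\ being stably trivial. Hence $H_n\Hom(P,N)\iso[\Omega^{-n}M,N]=[M,N]_n$.

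For (c), I will show $\pi_*$ is a quasi-isomorphism by constructing its inverse on homology directly. The naive approach of applying $\Hom(P,-)$ to the short exact sequence $0\to\ker\pi\to P\to M\to 0$ does not immediately conclude, because $\ker\pi$ is quasi-isomorphic to $M[-1]$ (not to $0$), so $\Hom(P,\ker\pi)$ has nontrivial homology. Instead: given a cycle $g\in\Hom(P,M)_n$, first lift $g\colon P_{-n}\to M$ to $f_{-n}\colon P_{-n}\to P_0$ using projectivity of $P_{-n}$ and the surjection $\pi$. Then extend $f_{-n}$ to a chain map $f\colon P\to P[n]$ by induction on $|k+n|$. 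For $k>-n$, the map $f_{k-1}\circ d\colon P_k\to P_{n+k-1}$ lands in $\im(d)=\Omega^{n+k}M$ (from $d\circ f_{k-1}\circ d=\pm f_{k-2}\circ d^2=0$), so by projectivity of $P_k$ it lifts through $d\colon P_{n+k}\twoheadrightarrow\Omega^{n+k}M$ to the desired $f_k$. For $k<-n$, the map $d\circ f_k$ vanishes on $\ker(d)=\Omega^{k+1}M$, hence descends to a map from $\Omega^kM$; injectivity of $P_{n+k-1}$ then permits extension along $\Omega^kM\hookrightarrow P_{k-1}$ to the required $f_{k-1}$. Injectivity of $\pi_*$ on homology is shown dually: if $f\in\Hom(P,P)_n$ is a cycle with $\pi\circ f_{-n}=\pm h\circ d$ for some $h\colon P_{-n-1}\to M$, lift $h$ to $h_0\colon P_{-n-1}\to P_0$ and extend to a chain homotopy $s\colon P\to P[n+1]$ with $f=ds+sd$ by the same inductive procedure.

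Part (d) is formally dual to (c): given a cycle in $\Hom(\Omega M,P)_k$, i.e.\ a map $\Omega M\to Z_kP=\Omega^{k+1}M$, one first extends it along the inclusion $\Omega M\hookrightarrow P_0$ using injectivity of $P_k$, then propagates inductively to a chain map $P\to P[k]$ using the same projective-lift/injective-extend routine, and similarly handles injectivity via an explicit chain homotopy. The main technical obstacle across (c) and (d) is the inductive bookkeeping: at each stage one must verify that the relevant composite lands in the kernel or image of an appropriate differential so that projective lifting or injective extension can be invoked, and one must check that different choices of lift produce chain-homotopic results. This is routine but delicate with signs, and is made possible by the key structural observation that $P$ simultaneously functions as a projective resolution (in nonnegative degrees) and an injective resolution (in nonpositive degrees) of $M$, with every $P_j$ projective-injective, so the required lifts and extensions are always available.
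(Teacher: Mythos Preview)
Your arguments for (a) and (b) are essentially what the paper does (the paper also sets up the natural maps and declares the verification routine). Your treatment of (c) and (d) is correct but takes a different and more laborious route than the paper's.

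For (c), you explicitly build a preimage chain map and then a null-homotopy by an inductive lift/extend procedure, which works. The paper instead uses exactly the ``naive'' short exact sequence approach that you dismiss. Your claim that $\Hom(P,\ker\pi)$ has nontrivial homology is incorrect: you are right that $\ker\pi$ is quasi-isomorphic to $M[-1]$, but $\Hom(P,-)$ does not preserve quasi-isomorphisms, so this does not compute $\Hom(P,\ker\pi)$. The paper observes that $\ker\pi$ splits as a direct sum of the bounded-below acyclic complex $(\cdots\to P_1\to Z_0P\to 0)$ and the bounded-above complex of injectives $(0\to P_{-1}\to P_{-2}\to\cdots)$. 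The Comparison Theorem (maps from a projective complex to a bounded-below acyclic complex, and maps from an acyclic complex to a bounded-above complex of injectives, are null-homotopic) then shows $\Hom(P,\ker\pi)$ is acyclic, giving the quasi-isomorphism immediately. Part (d) is handled the same way, by splitting $P/Z_0$ and showing $\Hom(P/Z_0,P)$ is acyclic, then invoking (a).

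So: your proof is valid, but the short exact sequence approach you set aside is both correct and considerably cleaner---it replaces all of your inductive sign-tracking with two one-line applications of the Comparison Theorem.
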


\begin{proof}
We can lift the isomorphism $M\ra Z_{-1}P$ to a map of complexes $\PM
\ra P$.  This gives a map $f\colon \Sigma^k M \ra
P_{-k}/\im(P_{-k+1})$, which is a weak equivalence in $\Stmod(T)$.
Any chain map $P\ra N$ of degree $k$ induces a map $\Sigma^k M \ra N$ by
precomposition with $f$.  This gives us a natural map $H_k\Hom(P,N)\ra
[\Sigma^k M,N]$.

Similarly, we can lift our isomorphism $Z_{-1}P \ra M$ to a map $P\ra
\PM$, and this induces maps $Z_kP\ra \Omega^{k+1}M$ which are again
weak equivalences in $\Stmod(T)$.   So any chain  map $N\ra P$ of
degree $k$ induces a map $N\ra Z_kP \ra \Omega^{k+1}M$.  This gives
a natural map $H_k\Hom(N,P)\ra [N,\Omega^{k+1}M]$.

It is a routine exercise to check that these two natural maps are
isomorphisms.

For part (c), first recall that any map from a projective
complex $Q$ to a bounded below acyclic complex $C$ is chain homotopic
to zero (this follows from the Comparison Theorem of homological
algebra).  It follows that $\Hom(Q,C)$ is acyclic, since the cycles in
degree $k$ are chain maps $\Sigma^k Q\ra C$.  Also, any map from an
acyclic complex $C$ to a bounded above complex of injectives $I$ is
chain homotopic to zero; so $\Hom(C,I)$ is acyclic.

Now we tackle (c).  Let $F$ denote the kernel of the chain map $P\fib
M$, and consider the short exact sequence of complexes
\[ 0\ra \Hom(P,F) \ra \Hom(P,P) \ra \Hom(P,M) \ra 0.
\]
It is enough to prove that $\Hom(P,F)$ is acyclic.  But note that $F$
decomposes as the direct sum of two complexes, namely the complexes
\[ \cdots \ra P_2 \ra P_1 \ra Z_0P \ra 0 \quad\text{and}\quad
 0 \ra P_{-1} \ra P_{-2} \ra \cdots.
\]
By the observations in the previous paragraph, $\Hom(P,C)$ is
acyclic when $C$ is either of these two complexes.

Finally, let us consider (d). Here we consider the map of complexes
$Z_0 \inc P$ (where $Z_0$ is the complex concentrated entirely in
degree $0$, consisting of the zero-cycles of $P$, $Z_0P$). We'll first show
that this induces a quasi-isomorphism after applying $\Hom(\blank,P)$.

Note that there is a short exact sequence of complexes
\[ 0 \ra \Hom(P/Z_0,P) \ra \Hom(P,P) \ra  \Hom(Z_0,P) \ra 0 \]
and that
$P/Z_0$ decomposes as the
direct sum of
\[ \cdots \ra P_2 \ra P_1 \ra 0  \quad\text{and}\quad
 0 \ra P_0/Z_0 \ra P_{-1} \ra P_{-2} \ra \cdots.
\]
As in the proof of (c), one argues by the Comparison Theorem
that $\Hom(C,P)$ is acyclic when $C$ is either a bounded below complex
of projectives or a bounded above acyclic complex.  This shows that
$\Hom(P/Z_0,P)$ is acyclic, and hence $\Hom(P,P)\ra \Hom(Z_0,P)$ is a
quasi-isomorphism.

To complete the proof of (d), just note that our map $\Omega M\ra P$
factors through $Z_0$, and that the map $\Omega M \ra Z_0$ is a weak
equivalence in $\Stmod(T)$.  The result then follows from the natural
isomorphisms in (a).
\end{proof}

\begin{proof}[Proof of Theorem~\ref{main-thm}]
To show that the given functors form a Quillen pair, we check that the
right adjoint preserves fibrations and trivial fibrations.  The
fibrations in both $\Stmod(T)$ and $\Mod\EM$ are just the surjections.
Since each level in $\PM$ is projective, $\Hom(\PM, -)$ preserves
surjections.  This functor actually preserves all weak equivalences,
as this follows from Lemma~\ref{lem-p-p}(b).  In particular, it
preserves trivial fibrations.

Let $L$ and $R$ denote the left and right adjoints in our Quillen pair
$\Mod\EM \adjoint \Stmod(T)$.  Then $R(M)=\EM$, and we remarked above
Lemma~\ref{3.6}
that $L(\EM)\iso M$.  We also note that $\EM$ is a compact generator
for $\Ho(\Mod\EM)$, from which it follows by \cite[2.2.2]{ss2} that
the only localizing subcategory of $\Ho(\Mod\EM)$ containing $\EM$ is
the whole homotopy category itself.  (Recall that a localizing
subcategory is a full triangulated subcategory that is closed under
arbitrary coproducts).  A similar statement holds for
$\Ho(\Stmod(T))$, using that $M$ is a compact generator for that
category.

Let $\uL$ and $\uR$ denote the derived functors of $L$ and $R$.  Our
task is to show that these give an equivalence of homotopy categories.
We first argue that $\uR$ preserves arbitrary coproducts.  Let
$\{N_\alpha\}$ be a set of $T$-modules.  There is of course a natural
map $\oplus_\alpha (RN_\alpha) \ra R(\oplus_\alpha N_\alpha)$.  Using
that $\EM$ is a generator for $\Ho(\Mod\EM)$, it follows that this map
is an isomorphism if and only if it induces an isomorphism after
applying $[\EM,\blank]_*$.  But it is easy to check that this is the
case, using the adjunctions and the compactness of  both $\EM$ and
$L(\EM)$.

Consider the unit and counit of the derived
adjunctions
\[  \eta_X\colon X \to \uR\,\uL(X)\\
\mbox{ and } \nu_N\colon \uL\,\uR(N)\ra N.
\]
The full subcategory of $\Ho(\Mod\EM)$ consisting of all $X$ such that
$\eta_X$ is an isomorphism is a localizing subcategory---this uses the
fact that $R$ preserves coproducts.  Likewise, the full subcategory of
$\Ho(\Stmod(T))$ consisting of all $N$ such that $\nu_N$ is an
isomorphism is a localizing subcategory.  To prove that $(\uL,\uR)$
gives an equivalence of homotopy categories, it therefore suffices to
check that $\eta_{\EM}$ and $\nu_M$ are isomorphisms since
$\EM$ and $M$ are generators.

Since $\EM$ is a cofibrant $\EM$-module, $\eta_{\EM}$ is isomorphic in
$\Ho(\Mod\EM)$ to the map $\EM\ra RL(\EM)$.  But this latter map is
an isomorphism in $\Mod\EM$.

To check that $\nu_M$ is an isomorphism we need one more step.  Note
that  by Lemma~\ref{lem-p-p}(c) the map
\[ \EM=\Hom(\PM,\PM) \ra \Hom(\PM,M)=RM
\]
 is a quasi-isomorphsm.  So $\EM$
is a cofibrant-replacement for $R(M)$.  Then $\nu_M$ is isomorphic in
$\Ho(\Stmod(T))$ to the composite $L(\EM) \ra L(RM) \ra M$.  This is
readily seen to be an isomorphism of $T$-modules.
\end{proof}


\section{Proof that $\cM$ and $\Me$ are not Quillen equivalent}

In this section we apply the material from the last section to our two
stable module categories $\cM$ and $\Me$.  We compute the endomorphism
dgas of $k$ and $\ke$, and the results of the last section show that
$\cM$ and $\Me$ are Quillen equivalent to module categories over these
dgas.  Finally, we use the results of \cite{ds-tpwe} to prove that
these module categories are not Quillen equivalent.

\medskip

First we claim that both $\cM$ and $\Me$ have compact
generators.

\begin{proposition}
The module $\bZ/p$ is a compact generator for both $\Stmod(R)$
and $\Stmod(\Re)$.
\end{proposition}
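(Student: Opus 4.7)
The plan is to handle compactness first (which is immediate) and then generation (which uses essentially everything we've developed about $\cM$ and $\Me$).

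For compactness, I would simply invoke Lemma~\ref{le:compact}: since $k=\Z/p$ is finitely generated both as an $R$-module and as an $\Re$-module, it is compact in $\Stmod(R)$ and in $\Stmod(\Re)$.

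For the generator property, I would work in $\cM$ first (the argument for $\Me$ is identical with $p$ replaced by $\epsilon$, as discussed in the subsection ``The case of $\Re$-modules''). Suppose $N\in\cM$ satisfies $[k,N]_*=0$. Since the suspension on $\Ho(\cM)$ is isomorphic to the identity (as remarked in the introduction), this is equivalent to $[k,N]=0$. By Lemma~\ref{le:Gamma} there is an isomorphism $N\iso F\oplus \Gamma N$ with $F$ free; since free modules are contractible in $\Stmod(R)$, we have $N\he \Gamma N$ in $\cM$. Now $\Gamma N$ is a $k$-vector space, so by the proposition identifying $\Ho(\cM)(V,W)$ with $\Vect(V,W)$ for vector spaces $V,W$, we have
\[ 0 = [k,N] \iso [k,\Gamma N] \iso \Hom_k(k,\Gamma N) \iso \Gamma N. \]
Hence $\Gamma N=0$ and therefore $N\he 0$, as required.

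There is no real obstacle here; the result is essentially a repackaging of what has already been done. The only small point to get right is the reduction from $[k,N]_*=0$ to $[k,N]=0$, which uses the identification of $\Sigma$ with the identity on $\Ho(\cM)$ and $\Ho(\Me)$. For $\Me$, one applies the analog of Lemma~\ref{le:Gamma} with $\Gamma M=(\Ann_M\epsilon)/\epsilon M$ and the fact (also noted in the excerpt) that the proposition identifying homotopy classes with $k$-linear maps carries over verbatim, since $k$-vector spaces embed fully faithfully into $\Me$ via the quotient $\Re\ra k$.
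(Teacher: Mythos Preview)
Your proof is correct, but it takes a different route from the paper for the generator statement. The paper invokes the equivalent characterization from \cite[2.2.1]{ss2} that a compact object is a generator precisely when the only localizing subcategory containing it is the whole homotopy category; it then observes that any localizing subcategory containing $\Z/p$ must contain $R$ (via the triangle coming from $0\ra\Z/p\ra R\ra\Z/p\ra 0$), hence all free modules and all $k$-vector spaces, and therefore all modules by Lemma~\ref{le:Gamma}. Your argument instead works directly from the definition of weak generator: you reduce $[k,N]_*=0$ to $[k,N]=0$ using $\Sigma\iso\id$, replace $N$ by $\Gamma N$, and then identify $[k,\Gamma N]$ with $\Hom_k(k,\Gamma N)\iso\Gamma N$ via the equivalence $\Ho(\cM)\simeq\Vect$. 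Your approach is more self-contained (no appeal to the localizing-subcategory criterion of \cite{ss2}) and exploits the explicit description of $\Ho(\cM)$ already established; the paper's approach is the more standard triangulated-category maneuver and would adapt to situations where one does not have such a complete grip on the homotopy category.
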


\begin{proof}
First, $\bZ/p$ is compact in $\Stmod(R)$ by Lemma~\ref{le:compact}.
~\cite[2.2.1]{ss2} shows that to be a compact generator is equivalent
to asking that every localizing subcategory which contains the given
compact object is the whole category.

If a localizing subcategory of $\Ho(\Stmod(R))$ contains $\bZ/p$, then
it contains $R$ because of the exact sequence $0\ra \bZ/p \ra R \ra
\bZ/p \ra
0$.   So it contains every free module and every $\bZ/p$-vector space, and
therefore it contains every  module by Lemma~\ref{le:Gamma}.
This shows that $\bZ/p$ is a generator of $\Stmod(R)$.

The same proof shows that $\bZ/p$ is a compact generator of $\Stmod(\Re)$.
\end{proof}

Next we identify the endomorphism \dga of our chosen generator in both
cases.

\begin{proposition}\label{prop-ep2}
The \dga $\E_k$ in $\Stmod(R)$ is quasi-isomorphic to the  \dga $A$
generated over $\bZ$ by $e$ and $x$ in degree one and $y$ in degree $-1$ with
the relations $e^2=0$, $ex + xe = x^2$, $xy = yx = 1$ and the
differentials $de = p$, $dx = 0$, and $dy = 0$.  That is,
\[ A = \bZ \langle e, x, y\rangle/(e^2=0, ex + xe = x^2, xy = yx = 1,
de = p, dx = 0, dy = 0)\]
where $|e|=|x|=1$ and $|y|=-1$.
\end{proposition}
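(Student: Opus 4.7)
The plan is to exhibit an explicit dga quasi-isomorphism $\phi\colon A \to \E_k$. Since $\Ann_R(p) = pR \iso k$ and $R/pR \iso k$, both $\Omega k$ and $\Sigma k$ are isomorphic to $k$ in $\Stmod(R)$, and we may take as a complete resolution of $k$ the periodic complex $P$ with $P_n = R$ for every $n \in \Z$ and every differential equal to multiplication by $p$. With this choice, a degree-$m$ element of $\E_k = \Hom(P,P)$ is simply a sequence $(a_k)_{k\in\Z}\in R^{\Z}$ (recording multiplication by $a_k$ at level $k$), and the formula for the differential from the paper reduces to $(df)_k = p\bigl(a_k + (-1)^{m+1} a_{k-1}\bigr)$.

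I would then define $\phi$ on generators by $\phi(x)_k = (-1)^k$ in degree $1$, $\phi(y)_k = (-1)^{k+1}$ in degree $-1$, and $\phi(e)_k = 1$ for $k$ even, $\phi(e)_k = 0$ for $k$ odd, in degree $1$. Using the composition rule $(fg)_k = f_{k+|g|}\circ g_k$ in $\Hom(P,P)$, a direct case check by parity of $k$ verifies $\phi(x)\phi(y) = \phi(y)\phi(x) = 1$, $\phi(e)^2 = 0$, and $\phi(e)\phi(x) + \phi(x)\phi(e) = \phi(x)^2$ on the nose, while the differential formula gives $d\phi(x) = d\phi(y) = 0$ and $d\phi(e) = p\cdot 1$. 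Consequently $\phi$ extends uniquely to a dga map $A \to \E_k$.

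Finally, I would compute $H_*(A)$ by hand. The key identity is $ex^n = \tfrac{1-(-1)^n}{2}\,x^{n+1} + (-1)^n x^n e$, proved by induction from $ex + xe = x^2$; the companion identity $ey + ye = 1$ (obtained by right-multiplying $ex+xe=x^2$ by $y$) handles negative powers. Together with $xy = yx = 1$ and $e^2 = 0$, these show that $\{x^n,\,x^{n-1}e : n\in\Z\}$ is a $\Z$-basis for $A$, grouped by degree. On this basis $dx^n = 0$ and $d(x^{n-1}e) = (-1)^{n-1}p\,x^{n-1}$, so $H_n(A) \iso \Z/p$, generated by $[x^n]$. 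On the other side, Lemma~\ref{lem-p-p}(c) gives $H_*(\E_k) \iso [k,k]_*$; and since $\Sigma k \simeq k$ in $\Stmod(R)$ and the only $R$-linear self-map of $k$ factoring through a projective is zero, $[k,k]_n \iso k$ in every degree. Since $\phi_*[x^n]$ is represented by the cycle $((-1)^{nk+\binom{n}{2}})_k$, whose mod-$p$ reduction is non-zero, $\phi_*$ is an isomorphism in each degree, so $\phi$ is a quasi-isomorphism.

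The main obstacle will be the bookkeeping in the middle step: checking $\phi(e)\phi(x) + \phi(x)\phi(e) = \phi(x)^2$ cleanly requires a case split by parity of $k$, and establishing the normal form $\{x^n,\,x^{n-1}e\}$ for $A$ is slightly delicate because $ex + xe = x^2$ intertwines left and right multiplication by $e$, so one must verify that the inductive rewriting is unambiguous and yields a genuine $\Z$-basis rather than merely a spanning set.
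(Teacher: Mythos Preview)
Your proof is correct and follows essentially the same approach as the paper: the same periodic complete resolution $P$, the same explicit elements $X=\phi(x)$, $Y=\phi(y)$, $E=\phi(e)$ in $\End(P)$, and the same $\Z$-basis $\{x^n,\,ex^{n-1}\}$ for $A$ (yours is $\{x^n,\,x^{n-1}e\}$, which differs by an invertible integral change of basis). The only minor difference is that the paper computes $H_*(\End(P))\iso\Z/p$ directly by describing cycles and boundaries in $\prod_i \Z/p^2$, whereas you invoke Lemma~\ref{lem-p-p} to identify $H_*(\E_k)$ with $[k,k]_*$ and then observe that boundaries in $\End(P)$ are divisible by $p$; both routes are short and valid.
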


\begin{proof}
Let $P$ be the chain complex consisting of $\Z/p^2$ in every
dimension, where the differential is multiplication by $p$.  Note that
$P$ is a complete resolution for $k$.  Then the dga $\E_k$ is
quasi-isomorphic to $\Hom(P,P)$.  Write $\Hom(P,P)=\End(P)$.

For all $n\in \Z$ we have $\End(P)_n \iso \prod_{i \in \bZ}
\Hom(\Z/p^2,\Z/p^2)\iso \prod_{i\in \Z} \bZ/p^2$.  Let $f=(f_i)$
denote an element of $\End(P)_n$, where each $f_i$ is a map $P_i \ra
P_{n+i}$.  Then the $k$th entry of $df$ is the map
$p(f_k+(-1)^{n+1}f_{k-1})$.

Let $1\in \End(P)_0$ denote the tuple where $f_i=1$ for all $i$.
Let $X\in \End(P)_1$ be the tuple where $f_i=(-1)^i$, and let $Y\in
\End(P)_{-1}$ be the tuple where $f_i=(-1)^{i+1}$.   Note that
$XY=YX=1$, and $d(X)=d(Y)=0$.   Let $E\in
\End(P)_1$ be the tuple where $f_i=1$ if $i$ is even, and $f_i=0$ if
$i$ is odd.  Note that $d(E)=p\cdot 1$, $E^2=0$, and $EX+XE=X^2$.
This allows us to construct a dga map $A\ra \End(P)$ by sending $x\mapsto
X$, $y\mapsto Y$, and $e\mapsto E$.

We can uniquely write every element of $\Hom(\bZ/p^2,
\bZ/p^2)=\bZ/p^2$ in the form $a+ pb$ for $a,b \in\{ 0, \cdots,
p-1\}$.  Using this notation, the cycles in $\End(P)_n$ for $n$ even
are tuples $f$ of the form $f_i = a + pb_i$, where $a$ is independent
of $i$.  For $n$ odd the cycles are tuples satisfying $f_i= a + pb_i$
when $i$ is even, and $f_i=(p-a)+pb_i$ when $i$ is odd; here again,
$a$ is independent of $i$.  Independently of the parity of $n$, the
boundaries in each degree are tuples where every entry is a multiple
of $p$ (that is, tuples satisfying $f_i=pb_i$).  Thus we see that
$H_n(\End(P))\iso \bZ/p$ for all $n$.

Now, it is easy to verify that in degree $n$ the dga $A$ consists of
the free abelian group generated by $x^n$ and $ex^{n-1}$.  This is
valid in negative dimensions as well if one interprets
$x^{-1}$ as $y$.  This description makes it routine to check that our
map $A\ra \End(P)$ is a quasi-isomorphism.
\end{proof}

\begin{proposition}\label{prop-eke}
The \dga $\E_{\ke}$ in $\Stmod(\Re)$ is quasi-isomorphic to the formal \dga
$\Ae =  \bZ/p[x,y]/(xy-1)$
with trivial differential.  Here $|x|=1$ and $|y|=-1$.
\end{proposition}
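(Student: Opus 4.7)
The argument parallels Proposition~\ref{prop-ep2} and is somewhat simpler. The plan is to take $P$ to be the $\bZ$-graded chain complex with $\Re$ in every dimension and all differentials equal to multiplication by $\epsilon$; this is acyclic with $Z_{-1}P = \ke$, so it is a complete resolution and $\E_{\ke}$ is quasi-isomorphic to $\End(P) = \Hom(P,P)$. As before, $\End(P)_n \iso \prod_{i\in \bZ} \Re$, and for $f=(f_i)$ of degree $n$ the differential is $(df)_k = \epsilon(f_k + (-1)^{n+1}f_{k-1})$, since both $d_X$ and $d_Y$ are multiplication by $\epsilon$.

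Next I would construct the comparison map. Define $X\in \End(P)_1$ by $X_i=(-1)^i$ and $Y\in \End(P)_{-1}$ by $Y_i=(-1)^{i+1}$. A direct check shows $dX=dY=0$ and $XY=YX=1$, so sending $x\mapsto X$ and $y\mapsto Y$ extends to a map of dgas $\Ae \to \End(P)$. Notice that since $\Ae$ is defined over $\bZ/p$ rather than $\bZ$, no generator analogous to the element $e$ (with $de=p$) from Proposition~\ref{prop-ep2} is required here: the scalar that needed to be killed in the $R$ case is already zero on the nose.

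To check that this map is a quasi-isomorphism, I would compute $H_*\End(P)$ directly. Writing each $f_i\in \Re$ uniquely as $a_i+b_i\epsilon$ with $a_i,b_i\in k$, the relation $\epsilon^2=0$ forces the cycle condition to involve only the $a_i$, reducing to $a_k + (-1)^{n+1}a_{k-1}=0$ for all $k$ while leaving the $b_i$ unrestricted. The boundaries in degree $n$ are precisely the tuples $(h_i\epsilon)_i$ with $(h_i)$ arbitrary, since the recursion $h_k = c_k + (-1)^n c_{k-1}$ can always be solved for $(c_k)$. Hence $H_n\End(P) \iso k$ for every $n$, with generator represented by any cycle whose $a$-components follow the prescribed alternating pattern and whose $b$-components vanish. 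A short sign computation confirms that $X^n$ (and $Y^n$ in negative degrees) is such a representative, so the induced map on homology is an isomorphism. The main subtlety lies in keeping the signs consistent between the differential and composition in $\End(P)$, but this is routine and exactly mirrors the previous proposition.
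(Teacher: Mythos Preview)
Your proposal is correct and follows essentially the same approach as the paper: the same complete resolution $P$, the same elements $X$ and $Y$, the same map $\Ae\to\End(P)$, and the same homology computation via the decomposition $f_i=a_i+b_i\epsilon$. You spell out the cycle and boundary analysis in slightly more detail than the paper (which simply says ``repeating the same analysis as in the previous proof''), and your remark that no analogue of $e$ is needed because $p\cdot 1=0$ in $\End(P)$ matches the paper's observation exactly.
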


\begin{proof}
This time let $P$ be the chain complex with $\Re$ in every dimension,
and where the differentials are all  multiplication by $\epsilon$.
This is a complete resolution of $k$, and so $\E_{\ke}$ is
quasi-isomorphic to $\End(P)$.

We again have $\End(P)_n=\prod_{i\in \Z} \Hom(\Re,\Re)\iso \prod_{i\in
\Z} \Re$, and we will denote elements by tuples $f=(f_i)$ where
$f_i\colon P_i \ra P_{n+i}$.  Then the $k$th entry of $df$ is
$\epsilon(f_k+(-1)^{n+1}f_{k-1})$.

Just as in the previous proof, we define elements $1\in
\End(P)_0$, $X\in \End(P)_1$, and $Y\in \End(P)_{-1}$.  Note that
$d(X)=d(Y)=0$, $XY=YX=1$, but this time we have $p\cdot 1=0$.  So we
get a map of dgas $\Ae \ra \End(P)$.

Every element in $\Re$ can be written uniquely in the form
$a+b\epsilon$ where $a,b\in \{0,1\ldots,p-1\}$.  Repeating the same
  analysis as in the previous proof, one finds that $H_n(\End(P))\iso
  \Z/p$ for all $n$, and that $\Ae\ra \End(P)$ is a quasi-isomorphism.
\end{proof}

\begin{corollary}\label{cor-qe}
$\Stmod(R)$ is Quillen equivalent to $\Mod A$ where $A$ is the
\dga from Proposition~\ref{prop-ep2}, and $\Stmod(\Re)$ is
Quillen equivalent to $\Mod \Ae$ where $\Ae$ is the \dga
from Proposition~\ref{prop-eke}.
\end{corollary}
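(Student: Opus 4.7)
The plan is to assemble this corollary from three ingredients already in hand, so the proof will be essentially a citation-chase rather than new work. First I would invoke Theorem~\ref{main-thm}: since the preceding proposition establishes that $k$ (resp.\ $\ke$) is a compact weak generator of $\Stmod(R)$ (resp.\ $\Stmod(\Re)$), Theorem~\ref{main-thm} immediately produces Quillen equivalences
\[
\Mod\, \E_k \;\longrightarrow\; \Stmod(R) \qquad\text{and}\qquad \Mod\, \E_{\ke} \;\longrightarrow\; \Stmod(\Re).
\]

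Next I would bridge from the endomorphism dgas $\E_k$ and $\E_{\ke}$ to the explicit dgas $A$ and $\Ae$. Propositions~\ref{prop-ep2} and~\ref{prop-eke} supply quasi-isomorphisms of dgas $A\we \E_k$ and $\Ae \we \E_{\ke}$. To turn these into Quillen equivalences of module categories, I would cite the standard fact (e.g.\ from Schwede--Shipley's work on algebras and modules over monoids) that a quasi-isomorphism of dgas $B\we B'$ induces a Quillen equivalence between $\Mod B$ and $\Mod B'$ via restriction and extension of scalars. Applying this to our two quasi-isomorphisms yields Quillen equivalences $\Mod A \we \Mod \E_k$ and $\Mod \Ae \we \Mod \E_{\ke}$.

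Concatenating the two zig-zags gives the desired Quillen equivalences $\Mod A \simeq_Q \Stmod(R)$ and $\Mod \Ae \simeq_Q \Stmod(\Re)$, which is exactly the statement of the corollary.

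The only real content is the invariance of module categories under quasi-isomorphism of dgas, and even that is a black-box citation. There is no genuine obstacle; the proof is a two-step composition and will be essentially a paragraph long, mostly just pointing at Theorem~\ref{main-thm}, Propositions~\ref{prop-ep2}--\ref{prop-eke}, and the standard invariance result.
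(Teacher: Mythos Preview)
Your proposal is correct and matches the paper's proof essentially verbatim: the paper invokes Theorem~\ref{main-thm} and then cites \cite[4.3]{ss1} for the fact that quasi-isomorphic dgas have Quillen equivalent module categories, exactly as you outline.
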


\begin{proof}
This follows from Theorem~\ref{main-thm} together
with~\cite[4.3]{ss1}; the latter
shows that quasi-isomorphic \dgas have Quillen equivalent module
categories.
\end{proof}

Our goal is now the following result:

\begin{theorem}\label{thm-qeq}
$\Mod A$ and $\Mod \Ae$ are not Quillen equivalent.  Hence, $\Stmod(R)$
and $\Stmod(\Re)$ are not Quillen equivalent either.
\end{theorem}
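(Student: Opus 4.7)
The plan is by contradiction. Suppose there is a zigzag of Quillen equivalences between $\Mod A$ and $\Mod \Ae$. Composing with the Quillen equivalences of Corollary~\ref{cor-qe} produces a zigzag of Quillen equivalences between $\cM$ and $\Me$. By Proposition~\ref{pr:ktok}, the induced derived equivalence carries $k \in \Ho(\cM)$ to an object isomorphic to $\ke \in \Ho(\Me)$. Invoking the main result of \cite{D}---that the homotopy endomorphism ring spectrum $\hEnd(X)$ of an object in a stable, combinatorial model category is a Quillen invariant up to weak equivalence of symmetric ring spectra---I obtain a weak equivalence $\hEnd_\cM(k) \simeq \hEnd_\Me(\ke)$.

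Next I use that $\cM$ and $\Me$ are additive model categories in the sense of \cite{ds2}, so their homotopy endomorphism ring spectra are weakly equivalent to the Eilenberg--MacLane ring spectra of the respective homotopy endomorphism dgas. By Propositions~\ref{prop-ep2} and~\ref{prop-eke} these dgas are quasi-isomorphic to $A$ and $\Ae$, so the existence of the zigzag of Quillen equivalences would produce a weak equivalence of symmetric ring spectra between the Eilenberg--MacLane spectra $HA$ and $H\Ae$. In the terminology of \cite{ds-tpwe}, this says that $A$ and $\Ae$ are \emph{topologically equivalent}. The theorem thus reduces to showing that $A$ and $\Ae$ are \emph{not} topologically equivalent.

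To establish this non-equivalence I would apply the obstruction theory from \cite{ds-tpwe}. Both dgas have the same homology $\bZ/p[x, x^{\pm 1}]$, so as graded rings they agree; any difference must live in higher structure. The dga $\Ae$ is formal over $\bZ/p$, so its Eilenberg--MacLane ring spectrum is a formal wedge of shifts of $H\bZ/p$ and is canonically an $H\bZ/p$-algebra. By contrast, $A$ is a $\bZ$-dga in which $p = de$ with $e$ not a cycle: $p$ is a boundary, but only nontrivially so, which one expects to obstruct $A$ from being topologically equivalent to any $\bZ/p$-dga. I would extract a primary topological invariant, living in a topological Hochschild-type cohomology group of $\bZ/p[x, x^{\pm 1}]$, that vanishes for $\Ae$ but is nonzero for $A$.

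The main obstacle is pinpointing this obstruction class and verifying that it is genuinely nonzero on $A$. Morally, the class should reflect the nontrivial Toda bracket structure on the mod-$p$ Moore spectrum detected by the lift of $\bZ/p$ to $\bZ/p^2$: this structure is visible in $A$, since $p$ is a boundary over $\bZ$ but not over $\bZ/p$, and it is invisible in the formal $\Ae$. Making this rigorous requires the Postnikov-tower analysis for ring spectra carried out in \cite{ds-tpwe}, together with a direct calculation showing that the associated $k$-invariant for $HA$ does not vanish in the relevant topological obstruction group, while the corresponding $k$-invariant for $H\Ae$ is zero by formality.
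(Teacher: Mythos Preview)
Your reductions in the first two paragraphs are correct and essentially coincide with the paper's argument (the steps labeled (1)--(3) immediately after the statement of the theorem). The only cosmetic difference is that you route through $\cM$ and $\Me$ via Corollary~\ref{cor-qe}, whereas the paper argues directly inside $\Mod A$ and $\Mod \Ae$; either works.

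The gap is in your last two paragraphs. You correctly reduce to showing that $A$ and $\Ae$ are not topologically equivalent, but you do not carry this out: you sketch a strategy involving a $k$-invariant in a ``topological Hochschild-type cohomology group'' and then concede that ``pinpointing this obstruction class and verifying that it is genuinely nonzero'' is the main obstacle. That is the entire content of the theorem, and it is left undone. Moreover, the heuristic you offer---that $\Ae$ is formal over $\Z/p$ while in $A$ the relation $de=p$ obstructs a $\Z/p$-structure---is suggestive but not yet an invariant of the Eilenberg--MacLane ring spectrum; one still has to exhibit a quantity preserved under topological equivalence that separates the two.

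The paper's method for this step is different and considerably more concrete than a $k$-invariant computation. It passes to the second Postnikov sections $P_2 A$ and $P_2 \Ae$ of the connective covers and compares the $H\Z/p$-homology \emph{algebras} $\pi_*\bigl(H\Z/p \sm^L_S H(P_2{-})\bigr)$. Because $\Ae$ is a $\Z/p$-algebra, the unit $H\Z/p \ra H(P_2\Ae)$ is central, so the dual Steenrod algebra maps centrally into $\pi_*\bigl(H\Z/p \sm^L_S H(P_2\Ae)\bigr)$; its degree-one generator furnishes a nonzero degree-one element that graded-commutes with every other degree-one element. On the $A$ side one uses that the comparison map $\pi_*\bigl(H\Z/p \sm^L_S H(P_2 A)\bigr) \ra \pi_*\bigl(H\Z/p \sm^L_{H\Z} H(P_2 A)\bigr)$ is an isomorphism in degree one, and the target is the explicit ring $\Z/p\langle e,x\rangle/(e^2=0,\ ex+xe=x^2,\ x^3=0)$, in which a direct check shows that \emph{no} nonzero degree-one element commutes with all others. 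This structural asymmetry distinguishes $HA$ from $H\Ae$ without ever locating a Postnikov $k$-invariant.
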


The argument can be broken up into the following steps:
\begin{enumerate}[(1)]
\item If there were a chain of Quillen equivalences between $\Mod A$
  and $\Mod \Ae$, then the object $A$ would have to be taken to $\Ae$
  in the derived equivalence of homotopy categories.  This is by
  Proposition~\ref{pr:ktok}.
\item The categories $\Mod A$ and $\Mod \Ae$ are stable, combinatorial
  model categories.  By \cite{D}, any object $X$ in these categories has
  an associated homotopy endomorphism ring spectrum, denoted
  $\hEnd(X)$.  Then by (1) and \cite[1.4]{D}, it follows that if $\Mod
  A$ and $\Mod\Ae$ were Quillen equivalent then one would have
  $\hEnd(A) \he \hEnd(\Ae)$ as ring spectra.
\item The model categories $\Mod A$ and $\Mod \Ae$ are actually
  $\Ch(\Z)$-model categories, meaning that they are tensored,
  cotensored, and enriched over $\Ch(\Z)$.  They are therefore {\it
  additive\/} model categories, in the sense of \cite{ds2}.
  But \cite[1.5, 1.7]{ds2} then says that that the homotopy endomorphism
  spectrum for any object in such a category is weakly equivalent to
  the Eilenberg-MacLane ring spectrum associated to its endomorphism
  dga.  The endomorphism dga of $A$ is just $A$ itself, and
  likewise for $\Ae$.  So this shows that if $\Mod A$ and $\Mod \Ae$
  are Quillen equivalent, then the Eilenberg-MacLane ring spectra
  corresponding to $A$ and $\Ae$ would be weakly equivalent.  That is
  to say---in the language of \cite{ds-tpwe}---$A$ and $\Ae$ would be
  {\it topologically equivalent\/}.
\end{enumerate}

By this chain of reasoning, proving Theorem~\ref{thm-qeq} reduces to
proving that $A$ and $\Ae$ are not topologically equivalent.  To get
started, we will first prove that $A$ is not quasi-isomorphic to
$\Ae$.  This is not strictly necessary for the rest of our argument,
but it sets the stage for the more complicated argument we have to
give below.

\begin{proposition}\label{prop-not-qi}
$A$ is not quasi-isomorphic to $\Ae$.
\end{proposition}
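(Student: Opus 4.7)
My approach is to find a derived invariant of associative $\Z$-dgas that distinguishes $A$ and $\Ae$. Specifically, if $A$ and $\Ae$ were quasi-isomorphic as $\Z$-dgas (via any zig-zag), then their derived base changes $A \otimes^L_\Z \Z/p$ and $\Ae \otimes^L_\Z \Z/p$ would be quasi-isomorphic as dgas, and in particular their homologies would agree as graded associative rings. I will compute both and exhibit an incompatibility.

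The first step is to use the flat model $\Lambda_\Z(\theta) = \Z \oplus \Z\theta$ of $\Z/p$ as a $\Z$-dga, with $|\theta|=1$, $\theta^2=0$, and $d\theta = p$. Then for any dga $B$, $B \otimes^L_\Z \Z/p$ is modeled by the strict tensor-product dga $B[\theta]/(\theta^2)$ with its combined differential. For $\Ae$, since $p$ acts as zero, the new differential is trivial, and we read off
\[
H_*(\Ae \otimes^L_\Z \Z/p) \;\cong\; \Z/p[x,y]/(xy-1) \otimes_{\Z/p} \Lambda_{\Z/p}(\theta),
\]
with the sign rule $\theta x = -x\theta$. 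In particular $[\theta]^2=0$, and the graded anticommutator $\{[x],[\theta]\} := [x][\theta]+[\theta][x]$ vanishes.

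For $A$, the key observation is that $\bar e := e - \theta$ is a degree-$1$ cycle in $A[\theta]/(\theta^2)$, since $d\bar e = de - d\theta = p - p = 0$. A direct calculation identifies $H_1(A \otimes^L_\Z \Z/p)$ as the $\Z/p$-span of $[x]$ and $[\bar e]$; one then gets $[\bar e]^2 = 0$ (using $e^2 = 0$, $\theta^2=0$, and $\theta e = -e\theta$), while the relation $ex+xe=x^2$ in $A$ combines with the sign rule $\theta x = -x\theta$ to give
\[
[x][\bar e] + [\bar e][x] \;=\; [(xe - x\theta)+(ex + x\theta)] \;=\; [ex + xe] \;=\; [x^2] \;\neq\; 0.
\]

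To conclude, I show the two rings cannot be isomorphic. Any prospective graded ring isomorphism $\phi$ would have $\phi([\theta]) = \alpha[x]+\beta[\bar e]$ and $\phi([x]) = \gamma[x]+\delta[\bar e]$; the source relations $[\theta]^2=0$ and $\{[x],[\theta]\}=0$ translate, via the products computed above, into $\alpha(\alpha+\beta)=0$ and $2\alpha\gamma+\alpha\delta+\beta\gamma = 0$ in $\Z/p$. A short case analysis ($\alpha=0$ versus $\alpha \neq 0$, hence $\beta=-\alpha$) forces $\gamma(\gamma+\delta) = 0$ in every case, whence $\phi([x])^2 = \gamma(\gamma+\delta)[x^2] = 0$; but $\phi([x^2])$ must be nonzero, a contradiction. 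The main obstacle will be the sign bookkeeping in the tensor-product dga $A \otimes_\Z \Lambda(\theta)$ needed to pin down the nonvanishing of $\{[x], [\bar e]\}$, on which the whole argument hinges.
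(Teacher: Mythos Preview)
Your proof is correct and follows essentially the same approach as the paper: compute the derived base change $\Z/p \otimes^L_\Z(-)$ of each dga and show that the resulting homology rings differ, the obstruction in both cases coming from the relation $ex+xe=x^2$. The paper's execution is a bit more streamlined on the $A$ side: since $A$ is already free as a $\Z$-module in each degree, one has $\Z/p \otimes^L_\Z A \simeq \Z/p \otimes_\Z A$ directly (no need to introduce $\theta$), giving the ring $\Z/p\langle e,x,y\rangle/(e^2=0,\ ex+xe=x^2,\ xy=yx=1)$ with zero differential, and the paper then concludes in one line by observing that this ring is not graded-commutative whereas $\Lambda_{\Z/p}(\theta)\otimes_{\Z/p} \Z/p[x,y]/(xy-1)$ is.
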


\begin{proof}
One way to proceed would be to construct a cofibrant-replacement
$QA\trfib A$ of dgas, and then to show that there is no
quasi-isomorphism from $QA$ to $B$.  The obstruction comes from the
relation $ex + xe = x^2$.  While an argument can be  made along these
lines, we instead give a different proof which will motivate the
argument for ring spectra in  Proposition~\ref{prop-nte} below.
%

Note that if $A$ and $\Ae$ were quasi-isomorphic, then there
would be an isomorphism between the rings $H_*(\bZ/p \otimes^L_{\bZ} A)$ and
$H_*(\bZ/p \otimes^L_{\Z} \Ae)$.  Since $A$ is cofibrant as a module
over $\bZ$, we have
$H_*(\bZ/p \otimes^L_{\Z} A) \iso H_*(\bZ/p \otimes A)$, which is the ring
\[\bZ/p\langle
e, x, y; de=dx=dy=0\rangle/
(e^2=0, ex + xe = x^2, xy =yx=1)
\]
where $|e|=|x|=1$ and $|y|=-1$.
For the other case, we use $C= \bZ \langle f; df=p \rangle/(f^2)$ as a
dga which is weakly equivalent to $\bZ/p$ and also cofibrant as a
$\Z$-module.  We then
calculate that
\[ H_*(\bZ/p \otimes^L_{\Z} \Ae) \iso H_*(C \otimes \Ae) \iso
\Lambda_{k}(f) \otimes k[x,y]/(xy-1)
\]
where $|f|=|x|=1$ and $|y|=-1$.
It is easy to see that the ring $H_*(\Z/p \otimes A)$ is not
isomorphic to $H_*(C \otimes \Ae)$---for example,
the latter ring is graded-commutative but the former is not.
Thus $A$ and $\Ae$ cannot be quasi-isomorphic.
\end{proof}

Before proceeding to the next result, we need to recall a few definitions.
If $T$ is a ring spectrum, a \dfn{connective cover} for $T$ is a
connective ring spectrum $U$ together with a map $U\ra T$ which
induces isomorphisms $\pi_i(U)\ra \pi_i(T)$ for $i\geq 0$.  Standard
obstruction theory arguments show that connective covers exist, and
that any two connective covers are weakly equivalent.

If $T$ is a connective ring spectrum then we can also talk about the
\dfn{Postnikov sections} of $T$.  The $n$th Postnikov section is a
ring spectrum $U$ together with a map $T\ra U$ such that $\pi_i(U)=0$
for $i>n$ and $\pi_i(T)\ra\pi_i(U)$ is an isomorphism for $i\leq n$.
Again, a standard obstruction theory argument shows that Postnikov
sections exist and are unique up to homotopy---see \cite[2.1]{ds-post} for
a detailed discussion.

It is easy to see that if $T$ and $T'$ are weakly equivalent ring
spectra then their connective covers and Postnikov sections are also
weakly equivalent ring spectra.

If $B$ is a dga, one can  define connective covers and
Postnikov sections similarly.  It is also possible to give more
explicit chain-level models, however.  We define the connective cover
$CB$ by
\[
[CB]_i=
\begin{cases} B_i & \text{if $i > 0$}, \\
Z_0B & \text{if $i=0$, and} \\
0 & \text{if $i<0$,}
\end{cases}
\]
where $Z_0B$ denotes the zero-cylces in $B$.
Note that there is a map of dgas $CB \ra B$, and this induces isomorphisms
in homology in non-negative degrees.

Next define the $n$th Postnikov section of $CB$, denoted by
$P_n(CB)$ (or just $P_n(B)$ by abuse):
\[
[P_n B]_i=
\begin{cases} CB_i & \text{if $i < n$}, \\
CB_n/\im(CB_{n+1}) & \text{if $i=n$, and} \\
0 & \text{if $i>n$.}
\end{cases}
\]
Again note that there is a map of dgas $CB \ra P_n B$.
See~\cite[3.1]{ds-tpwe} for a more thorough discussion of Postnikov
sections for \dgasp.

If $B$ is a dga, let $HB$ denote the Eilenberg-MacLane ring spectrum
associated to $B$.  It is easy to see that $H(CB)$ is a connective
cover for $HB$, and that $H(P_nB)$ is an $n$th Postnikov section for
$H(CB)$.

\begin{proposition}\label{prop-nte}
$A$ and $\Ae$ are not topologically equivalent.
\end{proposition}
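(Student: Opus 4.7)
The plan follows the strategy of Proposition~\ref{prop-not-qi}, adapted to ring spectra by replacing the algebraic derived tensor $-\otimes^L_{\Z}\F_p$ with the topological derived smash $-\wedge^L_S H\F_p$. Since $H\F_p$ is an $E_\infty$ ring spectrum, this smash product carries a natural ring spectrum structure, and in particular the graded ring $\pi_*(R\wedge^L_S H\F_p)$ is an invariant of weak equivalences of $R$ as a ring spectrum.

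Suppose for contradiction that $HA\simeq H\Ae$ as ring spectra. Then $\pi_*(HA\wedge^L_S H\F_p)$ and $\pi_*(H\Ae\wedge^L_S H\F_p)$ would agree as graded rings. On the $\Ae$ side, $H\Ae$ is naturally an $H\F_p$-algebra, so $H\Ae\wedge^L_S H\F_p\simeq H\Ae\wedge^L_{H\F_p}\bigl(H\F_p\wedge^L_S H\F_p\bigr)$; the factor $H\F_p\wedge^L_S H\F_p$ has $\pi_*$ equal to the dual Steenrod algebra, which contains a degree-$1$ Bockstein class playing the role of the algebraic element $f\in C=\Z\langle f; df=p\rangle/(f^2)$ from Proposition~\ref{prop-not-qi}. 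The resulting ring exhibits graded commutativity between this Bockstein class and the degree-$1$ generator $x\in\pi_1 H\Ae$, in direct analogy with the algebraic $\Lambda_{\F_p}(f)\otimes\F_p[x,y]/(xy-1)$.

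On the $A$ side, $HA$ is only an $H\Z$-algebra. Using the cofiber sequence $H\Z\xrightarrow{p}H\Z\to H\F_p$ as the topological analog of the cofibrant replacement $C$, the computation of $\pi_*(HA\wedge^L_S H\F_p)$ should parallel the algebraic computation of $H_*(C\otimes A)$. The Bockstein-type differential $de=p$ in $A$ then produces a nontrivial anticommutator relation between the topological Bockstein class and $x$, analogous to the algebraic relation $ex+xe=x^2$ from the proof of Proposition~\ref{prop-not-qi}. This gives a non-graded-commutative ring on the $A$ side that cannot be matched on the $\Ae$ side, yielding the desired contradiction.

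The main obstacle is extracting this anticommutator relation rigorously at the level of ring spectra. Unlike the algebraic setting where $\F_p\otimes^L_{\Z} R$ is literally a dga whose homology can be computed directly, the multiplicative structure on $\pi_*(HR\wedge^L_S H\F_p)$ is only indirectly accessible and must be tracked through a K\"unneth- or Tor-type spectral sequence. One must verify that the algebraic obstruction $ex+xe=x^2$ survives as a genuine ring-theoretic relation rather than being cancelled by the additional topological structure coming from the sphere spectrum and higher dual Steenrod elements. This is where the Postnikov-tower machinery developed at the end of this section, together with techniques from \cite{ds-tpwe}, become essential.
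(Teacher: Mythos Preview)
Your proposal correctly identifies the relevant invariant---the graded ring $\pi_*(H\Z/p \sm^L_S H(-))$---and the distinguishing phenomenon: a central degree-$1$ element on the $\Ae$ side versus genuine noncommutativity on the $A$ side. But as you concede in your final paragraph, this is a strategy rather than a proof; the hard step is deferred.

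The missing idea is the passage to the second Postnikov section $P_2$ of the connective cover, which you mention only in passing. This truncation does two concrete things. First, it makes the underlying spectrum of both $H(P_2 A)$ and $H(P_2\Ae)$ a finite wedge $H\Z/p \vee \Sigma H\Z/p \vee \Sigma^2 H\Z/p$, so one can verify by inspection that the map $\mathcal{A}_* \to \pi_*(H\Z/p \sm^L_S H(P_2\Ae))$ from the dual Steenrod algebra is injective in degree $1$; centrality then follows because $P_2\Ae$ is a $\Z/p$-algebra, so this map is a map of rings with central image. Second---and this is the step your outline lacks---the same finiteness forces the comparison map
\[
\phi\colon \pi_*\bigl(H\Z/p \sm^L_S H(P_2 A)\bigr) \longrightarrow \pi_*\bigl(H\Z/p \sm^L_{H\Z} H(P_2 A)\bigr)
\]
to be an isomorphism in degree $1$. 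The target of $\phi$ is the purely algebraic ring $H_*(\Z/p \otimes^L_\Z P_2 A)$, via the equivalence of $H\Z$-algebras with dgas, and there the relation $ex+xe=x^2$ is visible by direct computation. Thus $\phi$ transports the degree-$1$ noncommutativity back to the topological side. Without the Postnikov truncation you have no control over $\phi$ (the full dual Steenrod algebra and the $\Z$-periodicity of $A$ interact in ways you cannot read off), and your assertion that the topological computation ``should parallel'' the algebraic one remains a hope rather than an argument.
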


\begin{proof}
If the two \dgas $A$ and $\Ae$ were topologically equivalent then
clearly their connective covers and $n$th Postnikov sections of these
covers would also be topologically equivalent.
We will show here that $P_2 A$ and $P_2 \Ae$ are not topologically
equivalent.

The second Postnikov section of $C\Ae$ is
$P_2 \Ae \iso \Z/p[x]/(x^3)$, where $x$ has degree $1$ and $dx=0$.
For the second Postnikov section of $CA$ we can use the model
\[ P_2 A=\Z\langle e,x; de=p, dx=0\rangle /(e^2=0,ex+xe=x^2,x^3=0)
\]
where $e$ and $x$ have degree $1$ (this dga clearly has a map from
$CA$, and it has the properties of a Postnikov section).

If $P_2 A$ and $P_2 \Ae$ were topologically
equivalent, then their $H\Z/p$ homology algebras would be isomorphic;
that is, we would have an isomorphism of rings between
$\pi_* (H\Z/p \sm^L_S H(P_2 A))$ and $\pi_*(H\Z/p \sm^L_S H(P_2 \Ae))$.
We will argue that the latter ring has a nonzero element of degree $1$
which commutes (in the graded sense) with every other element of
degree $1$, whereas the former ring has no such element.

Since $\Ae$ is a $\Z/p$-algebra, $H(P_2 \Ae)$ is an $H\Z/p$-algebra.
In particular, the map $H\Z/p \ra H(P_2\Ae)$ is central.  It follows
that the map
\[ H\Z/p\sm^L_S H\Z/p \ra H\Z/p\sm^L_S H(P_2\Ae)
\]
is central,
and therefore the induced map on homotopy is also central (in the
graded sense).  If
${\mathcal A}_*$
denotes the dual Steenrod algebra $\pi_*(H\Z/p \sm^L_S H\Z/p)$,
then we are saying we have a central map
\[ \theta\colon{\mathcal A}_* \ra \pi_* (H\Z/p \sm^L_S H(P_2 \Ae)).
\]
We claim that $\theta$ is an injection in degree one.  To see this, we
only need to understand the underlying spectrum of $H(P_2\Ae)$,
and as a spectrum it is weakly equivalent to $H\Z/p\Wedge \Sigma H\Z/p\Wedge
\Sigma^2 H\Z/p$.  The fact that $\theta$ is an injection in degree one
then follows at once.

The only thing we need to know here about  ${\mathcal A}_*$
is that it is graded-commutative and has a nonzero element in degree one
($\xi_1$ for $p=2$ or $\tau_0$ for $p$ odd)~\cite{milnor}.  The image
of this element under $\theta$ gives us a nonzero central element of
the ring $\pi_*(H\Z/p \sm^L_S H(P_2 \Ae))$, lying in degree $1$.
(A little extra work shows that
$\pi_*(H\Z/p\sm^L_S H(P_2\Ae))\iso {\mathcal{A}}_*[x]/(x^3)$, but we
will not need this).

Our next step is to analyze
the graded ring $\pi_*(H\Z/p \sm^L_S H(P_2 A))$.
The unit map $S \to H\Z$ induces an algebra map
\[ \phi\colon \pi_*(H\Z/p
\sm^L_S H(P_2 A)) \to \pi_*(H\Z/p \sm^L_{H\Z} H(P_2 A)).
\]
We claim that $\phi$ is an isomorphism in degree one.  To see
this we only need to understand $H(P_2A)$ as an $H\Z$-module; and as
an $H\Z$-module it is weakly equivalent to $H\Z/p\Wedge \Sigma H\Z/p
\Wedge \Sigma^2H\Z/p$.  The fact that $\phi$ is an isomorphism in
degree one now follows from the fact that $\mathcal{A}_* \ra
\pi_*(H\Z/p\sm^L_{H\Z} H\Z/p)$ is an isomorphism in degrees zero and
one.

Using what we have just learned about $\phi$, it follows that if
$\pi_*(H\Z/p\sm^L_S H(P_2A))$ had a nonzero element of degree one
which commutes with all the other elements of degree one, then the
same would be true of $\pi_*(H\Z/p\sm^L_{H\Z} H(P_2A))$.  But this
latter ring is something which is easy to calculate, because
$H\bZ$-algebra spectra are modeled by dgas \cite{dg-s}.
It is isomorphic to $H_*(\Z/p \otimes^L_{\Z}
P_2 A)$, which---since $P_2 A$ is cofibrant as a $\Z$-module---is the
same as
\[ H_*(\Z/p\tens_\Z P_2 A)\iso \Z/p\langle e, x ;de=dx=0
\rangle/ (e^2=0, ex + xe = x^2, x^3 =
0).
\]
An easy check verifies that in this ring there is no nonzero
element in degree one which commutes with all others.

Thus, $P_2 A$ and $P_2 \Ae$ are not topologically equivalent.  We
conclude that $A$ and $\Ae$ are not topologically equivalent either.
\end{proof}

\begin{proof}[Proof of Theorem~\ref{thm-qeq}]
This follows immediately from Proposition~\ref{prop-nte} and reductions
(1)--(3) made after the statement of the theorem.
\end{proof}

\begin{remark}
We could have also approached the proof of Theorem~\ref{thm-qeq} by
quoting \cite[7.2]{ds-tpwe}.  This result shows that
the model categories $\Mod A$ and $\Mod \Ae$ are
Quillen equivalent if and only if there is a cofibrant, compact
generator $P \in \Mod A$ such that $\Hom_A(P,P)$ is topologically
equivalent to $\Ae$.  But such a $P$ would have $[P,P]\iso
H_0(\Ae)\iso \Z/p$, and there is only one object in $\Ho(\Mod A)$
whose set of endomorphisms has exactly $p$ elements---namely, $A$
itself.  So we would have $\Ae$ topologically equivalent to
$\Hom_A(A,A)=A$, and this is contradicted by
Proposition~\ref{prop-nte}.
Remarks (1)--(3) above essentially constitute the proof of
\cite[7.2]{ds-tpwe} in this case.
\end{remark}

Recall that dgas are said to be \dfn{derived equivalent} if there is a
triangulated equivalence between their homotopy categories of
dg-modules.  Thus, we have established that
$A$ and $\Ae$ are derived equivalent dgas whose model categories of
modules are not Quillen equivalent.

\begin{remark}\label{rem-schlich}
It is worth noting that $A$ and $\Ae$ are also derived equivalent dgas
which, for $p>3$, have non-isomorphic $K$-theories.  To see this,
recall that Schlichting~\cite[1.7]{marco} shows that the Waldhausen
$K$-theories of the stable module categories of finitely generated
modules over $R$ and $\Re$ are not isomorphic at $K_4$, provided
$p>3$.  This is based on the calculations of $K_3$ for $R$ and $\Re$
from~\cite{EF} and~\cite{ALPS}.  Schlichting actually claims his
conclusions for $p$ odd, but the calculations of $K_3(\Z/9)$
in~\cite{ALPS} are not correct (see \cite{G} for the correct answer).
Thus we exclude $p=3$ here.
Since Schlichting considered the $K$-theory of the cofibrant and
compact objects in $\Stmod(R)$ and $\Stmod(\Re)$, it follows
from~\cite[3.10]{ds1} and Corollary~\ref{cor-qe} that $K(A)$ and
$K(\Ae)$ are not isomorphic for $p > 3$.
\end{remark}


\section{Diagram categories}
\label{se:diagram}

Note that $\cM$ and $\Me$ are cofibrantly-generated model categories.
So for any small category $I$, there are {\em projective model
category structures} on the diagram categories $\cM^I$ and $\Me^I$
where in each case the weak equivalences and fibrations are
objectwise. See \cite[Section 11.6]{H}.  Our goal in this section is
to establish some basic comparisons between the homotopy categories
$\Ho(\cM^I)$ and $\Ho(\Me^I)$.

\medskip

We will need the following lemma.  It is well-known, but we include a
proof for the reader's convenience.

\begin{lemma}
Let $\cC$ be a pointed model category and let $Y$ be a group object in
$Ho(\cC)$.  For any object $X\in \cC$, the two evident
abelian group structures on $\Ho(\cC)(\Sigma X,Y)$ are identical.
\end{lemma}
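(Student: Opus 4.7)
The plan is to use the classical Eckmann--Hilton argument. The set $S=\Ho(\cC)(\Sigma X,Y)$ carries two natural binary operations: one, call it $+$, induced by the cogroup structure on $\Sigma X$ (which exists because $\Sigma X$ is a suspension in a pointed model category, so it comes equipped with a pinch map $\Sigma X \to \Sigma X \vee \Sigma X$ in $\Ho(\cC)$), and another, call it $\oplus$, induced by the assumed group structure on $Y$ (via the multiplication $Y\times Y\to Y$). Both operations have the constant map at the basepoint as their unit.

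First I would recall (or briefly verify) that each of these operations defines a group structure on $S$: the cogroup axioms on $\Sigma X$ give a group structure via $(f+g)=\nabla_Y \circ (f\vee g)\circ \mu_{\Sigma X}$, where $\mu$ is the pinch map and $\nabla_Y$ is induced by the fold/multiplication, and dually for $\oplus$. The key observation, which I would set up next, is that the map $\mu_{\Sigma X}\colon \Sigma X \to \Sigma X \vee \Sigma X$ and the map $m_Y\colon Y\times Y \to Y$ are morphisms in $\Ho(\cC)$, so naturality of $\Hom$ gives that $\oplus$ is a homomorphism for $+$ (i.e., applying $\Hom(\Sigma X,-)$ to $m_Y$ is a group homomorphism with respect to $+$), and likewise $+$ is a homomorphism for $\oplus$.

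From this mutual compatibility, the Eckmann--Hilton argument applies verbatim: for any $a,b,c,d\in S$ the interchange law
\[
(a+b)\oplus (c+d) = (a\oplus c)+(b\oplus d)
\]
holds, and setting various combinations of the arguments equal to the common unit $0$ forces $a+b=a\oplus b=b+a$ for all $a,b$. Hence the two operations agree (and are automatically commutative).

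The main obstacle, such as it is, will be being careful with the model-category bookkeeping: the pinch map is only defined after passing to $\Ho(\cC)$, and one has to choose a definite cofiber/suspension model for $\Sigma X$ and a fibrant group-object model for $Y$ so that the two operations are expressed on the same set $\Ho(\cC)(\Sigma X,Y)$. Once both structures are transported to this common hom-set in the homotopy category, the interchange law is purely formal from the naturality of Hom, and Eckmann--Hilton finishes the argument.
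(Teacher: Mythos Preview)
Your proposal is correct and is essentially the same Eckmann--Hilton argument the paper uses. The paper presents it as a single commutative diagram (showing directly that the composite $\sigma\circ(f\times g)\circ\Delta$ equals $\nabla\circ(f\vee g)\circ\gamma$ by factoring through $\Sigma X\vee\Sigma X\hookrightarrow\Sigma X\times\Sigma X$ and $Y\vee Y\hookrightarrow Y\times Y$), which amounts to the $b=c=0$ instance of your interchange law, but the underlying idea is identical.
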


\begin{proof}
Let $f$ and $g$ be two maps in $\Ho(\cC)(\Sigma X, Y)$. We consider
the diagram
\[ \xymatrix{
\Sigma X \ar[dr]_{\gamma}\ar[r]^-{\Delta} & (\Sigma X)\times (\Sigma X) \ar[r]^-{f\times
  g} & Y\times Y \ar[r]^-\sigma & Y.\\
& (\Sigma X)\Wedge (\Sigma X) \ar@{ >->}[u]\ar[r]_-{f\Wedge g} &
Y\Wedge Y
\ar@{ >->}[u] \ar[ur]_{\Delta}
}
\]
Here $\gamma$ is the comultiplication on $\Sigma X$ constructed by
Quillen in \cite{Q}.  The vertical maps both have the form
$(id,*)\Wedge (*,id)$.  The top and bottom composites represent the
two ways of multiplying $f$ and $g$ in $\Ho(\cC)(\Sigma X,Y)$.

The properties of a comultiplication ensure that the left triangle
commutes, and the properties of a multiplication ensure that the right
triangle commutes.  The middle square is obviously commutative, so
this finishes the proof.
\end{proof}

\begin{prop}
\label{pr:Z/p-diags-M}
\label{5.2}
Let $I$ be a small category.  Then for any two diagrams $D_1,D_2\in
\Me^I$, the abelian group $\Ho(\Me^I)(D_1,D_2)$ is a $\Z/p$-vector
space.  For any two diagrams $E_1,E_2\in \cM^I$, the abelian group
$\Ho(\cM^I)(E_1,E_2)$ is killed by $p^2$.
\end{prop}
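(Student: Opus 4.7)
The plan is to exploit the fact that $\Re = k[\epsilon]/(\epsilon^2)$ is a $\Z/p$-algebra, while $R=\Z/p^2$ kills $p^2$. Concretely, for any $\Re$-module $N$ one has $p\cdot\id_N = 0$, and for any $R$-module $N$ one has $p^2\cdot\id_N = 0$. Transferring these facts pointwise to diagram categories, $p\cdot\id_{D_2} = 0$ as a morphism in $\Me^I$ for every $D_2$, and $p^2\cdot\id_{E_2}=0$ in $\cM^I$ for every $E_2$. These zero morphisms remain zero in the homotopy categories. The goal is therefore to identify multiplication by an integer $n$ on $\Ho(\Me^I)(D_1,D_2)$ with post-composition by $n\cdot\id_{D_2}$, and similarly for $\cM^I$; the claimed annihilation then follows at once.

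First I would set up the group structure. The categories $\Me^I$ and $\cM^I$ are stable, so by the preceding lemma, for any objects $D_1,D_2$ there is a well-defined abelian group structure on $\Ho(\cdot)(D_1,D_2)$, and (writing $D_1\cong \Sigma X$ in the homotopy category, which is possible since the model categories are stable) this structure can be obtained either from the cogroup on $\Sigma X$ or from the group structure on $D_2$ coming from any of its deloopings. I would use the description via $D_2$: in that description, the $n$-fold sum of a morphism $f\colon D_1\ra D_2$ is the composite of $f$ with the multiplication $D_2\times D_2\ra D_2$ iterated, which is the morphism induced by $n\cdot \id_{D_2}$.

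Second, since $\Me^I$ and $\cM^I$ are additive categories (they are diagram categories over additive module categories), the integer $n$ acts on $\Hom_{\Me^I}(D_1,D_2)$ on the nose by post-composition with $n\cdot\id_{D_2}$, and this action descends to $\Ho(\Me^I)(D_1,D_2)$ and agrees with the abelian-group-structure action identified in the previous step. Now for any $D_2\in \Me^I$, $p\cdot\id_{D_2}$ is pointwise zero (as every value is an $\Re$-module, hence $p$-torsion), so $p$ acts as $0$ on $\Ho(\Me^I)(D_1,D_2)$; this gives the $\Z/p$-vector space claim. The identical argument, with $p^2$ in place of $p$ and $\Re$ replaced by $R$, yields that $\Ho(\cM^I)(E_1,E_2)$ is killed by $p^2$.

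The main point requiring care is the identification of the additive-category multiplication-by-$n$ with the abelian group structure on $\Ho(\cdot)(D_1,D_2)$ coming from stability; this is exactly what the preceding lemma accomplishes (by matching the cogroup-on-source and group-on-target descriptions and observing that, in an additive model category, the latter is computed by post-composition). Everything else reduces to the trivial observations that $p\cdot\id$ vanishes on $\Re$-modules and $p^2\cdot\id$ vanishes on $R$-modules.
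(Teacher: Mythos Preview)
Your proposal is correct and follows essentially the same approach as the paper: both recognize that each $D_2$ is a group object in $\Ho(\Me^I)$ via objectwise addition, invoke the preceding lemma to identify the stable abelian group structure on $\Ho(\Me^I)(D_1,D_2)$ with the one induced by the target, and then observe that $n[f] = (n\cdot\id_{D_2})\circ f$ together with $p\cdot\id_{D_2}=0$ (resp.\ $p^2\cdot\id_{E_2}=0$) finishes the argument. Your write-up is slightly more explicit about the role of stability (writing $D_1\cong\Sigma X$) and the use of the lemma, but the substance is identical.
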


\begin{proof}
We give the proof for $\Me$, and note that the proof for $\cM$ is
similar.

First note that every diagram $D\in \Me^I$ is an abelian group object,
using the objectwise addition $D(i)\oplus D(i) \ra D(i)$.  We can
therefore study the group structure on $\Ho(\Me^I)(D_1,D_2)$ induced
by the target.  In this group structure, if $f$ is any map in
$\Ho(\Me^I)(D_1,D_2)$ then $n[f]=f+f+\cdots+f$ ($n$ times) is the same
as $\bigr (n[\id_{D_2}]\bigl )\circ f$.

However, $p[\id_{D_2}]$ is actually equal to the zero map in $\Me^I$ (even before
going to the homotopy category).  So $p[f]$ is also zero.
\end{proof}

It is natural to wonder whether there exists a small category $I$ and
diagrams $D_1,D_2\colon I\ra \cM$ such that $\Ho(\cM^I)(D_1,D_2)$ is
not a $\Z/p$-vector space.  So far we have not been able to find such
examples.  We'll next describe a  result showing that for simple
categories $I$ such examples do not exist.

A \dfn{direct Reedy category} is a category $I$ in which every
object can be assigned a non-negative integer (called its degree) such
that every non-identity morphism raises degree \cite[Def. 15.1.2]{H}.
This is a special case of the more general notion of {\it Reedy
category\/}.

If $I$ is a Reedy category and $\cC$ is a model category, then there
is a {\it Reedy model structure} on $\cC^I$, defined in
\cite[15.3]{H}.  The weak equivalences are the objectwise weak
equivalences, and when $\cC$ is cofibrantly-generated this model
structure is Quillen equivalent to the projective model structure on
$\cC^I$.  When $I$ is a direct Reedy category then the Reedy
fibrations are precisely the objectwise fibrations, and so the Reedy
and projective model structures on $\cC^I$ coincide.  The upshot is that
this gives us a nice description of the projective
cofibrations in $\cC^I$: they are the Reedy cofibrations of
\cite[15.3.2]{H}.

\begin{prop}
\label{pr:Z/p-diags-N}
\label{5.3}
Let $I$ be a small, direct Reedy category.  Then for any two diagrams
$D_1,D_2\in \cM^I$, the abelian group $\Ho(\cM^I)(D_1,D_2)$ is a
$\Z/p$-vector space.
\end{prop}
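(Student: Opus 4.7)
The plan is to reduce the statement to showing that $p\cdot\id_{D_2}=0$ in $\Ho(\cM^I)$ for every $D_2\in\cM^I$. Arguing as in Proposition~\ref{5.2}, any $[f]\in\Ho(\cM^I)(D_1,D_2)$ then satisfies $p[f]=[(p\cdot\id_{D_2})\circ f]=0$, so the already-abelian hom-group is killed by $p$ and is a $\Z/p$-vector space.

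In the projective model structure on $\cM^I$ the weak equivalences are objectwise, and in $\cM=\Stmod(R)$ an object $M$ is weakly equivalent to zero precisely when $M$ is a free $R$-module (equivalently, $\Gamma M=0$, by Lemma~\ref{le:Gamma}). Thus a diagram $F\in\cM^I$ is weakly equivalent to $0$ iff $F(i)$ is free for every $i$, and any morphism that factors through such an $F$ is null in $\Ho(\cM^I)$. So the plan is to construct an objectwise-free $F\in\cM^I$ together with natural transformations $\psi\colon D_2\to F$ and $\phi\colon F\to D_2$ in $\cM^I$ satisfying $\phi\circ\psi=p\cdot\id_{D_2}$ on the nose.

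The single-object input is this: by Lemma~\ref{le:Gamma}, every $M\in\cM$ splits as $M\iso F_0\oplus V$ with $F_0$ free and $V$ a $k$-vector space, and then $p\cdot\id_M$ factors as $M\twoheadrightarrow F_0\xrightarrow{p\,\cdot}F_0\hookrightarrow M$ through the free module $F_0$. I would assemble such factorizations into a diagram by induction on the degree of the direct Reedy category $I$. At each object $i$ of degree $n$, assuming $F$, $\phi$, $\psi$ are compatibly defined on all objects of degree $<n$, the latching object $L_iF=\colim_{j\to i,\,j\ne i}F(j)$ comes equipped with an induced map $L_iF\to D_2(i)$ as well as a partial prescription for $\psi(i)$ on the image of $L_iD_2\to D_2(i)$. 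One sets $F(i):=L_iF\oplus G_i$ with $G_i$ a sufficiently large free $R$-module, extends $\phi$ by combining the inherited map $L_iF\to D_2(i)$ with a surjection $G_i\to D_2(i)/\im(L_iF\to D_2(i))$ lifted back to $D_2(i)$, and extends $\psi(i)$ using the one-object recipe applied to a chosen splitting of $D_2(i)$ after accounting for the inherited constraints.

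The main obstacle is the bookkeeping in this inductive step: one must verify that the constraints on $\psi(i)$ inherited from the $\psi(j)$ with $j\to i$ of lower degree are $R$-linear on $\im(L_iD_2\to D_2(i))$, are consistent with $\phi(i)\psi(i)=p\cdot\id$ there, and admit an $R$-linear extension to all of $D_2(i)$ which continues to satisfy that identity. The direct Reedy hypothesis is essential: it guarantees that only morphisms out of objects of strictly lower degree contribute to $L_i$, so at each stage the constraints are already consistent by the inductive hypothesis, and enlarging $G_i$ gives enough freedom to extend $\psi(i)$ over all of $D_2(i)$. Without this hypothesis—say when $I$ has morphisms between objects of the same degree—the constraints would become circular and the construction would not obviously close up, consistent with the status of the analogous question for general $I$ as an open problem remarked upon in the introduction.
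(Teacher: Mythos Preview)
Your strategy matches the paper's: reduce to showing $p\cdot\id_{D}=0$ in $\Ho(\cM^I)$ by factoring $p\colon D\to D$ through an objectwise-free diagram, built by induction on Reedy degree. But the execution has a genuine gap and avoidable complications that the paper's argument sidesteps.

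The gap is that you never pass to a Reedy cofibrant replacement of $D_2$. Without this, the latching map $L_iD_2\to D_2(i)$ need not be injective, and then your ``inherited constraints'' on $\psi(i)$ can be inconsistent: extending $\psi$ at $i$ requires the composite $L_iD_2\xrightarrow{L_i\psi} L_iF\hookrightarrow F(i)$ to factor through $D_2(i)$, i.e.\ that $\ker(L_iD_2\to D_2(i))\subseteq\ker(L_i\psi)$, and nothing in your construction forces this. The paper assumes $D$ Reedy cofibrant from the start, so each $L_iD\to D(i)$ is a monomorphism and the obstruction vanishes.

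The paper also avoids building $F$, $\phi$, $\psi$ simultaneously. It obtains $F$ and $\phi$ for free by factoring $0\to D$ as $0\trcof F\fib D$ in $\cM^I$, so $F$ is automatically objectwise free and $\phi$ is a given surjection; only the lift $\psi\colon D\to F$ must be built. The inductive step is then the single lifting problem
\[
\xymatrix{ L_iD \ar[r]\ar@{ >->}[d] & F(i)\ar@{->>}[d] \\ D(i)\ar[r]^{p}\ar@{.>}[ur] & D(i) }
\]
isolated as Lemma~\ref{le:lifts} and proved via the classification of monomorphisms in $\cM$ (Proposition~\ref{pr:monos}). Your ``one-object recipe'' from the splitting $M\cong F_0\oplus V$ only handles the case $L_iD=0$ and does not by itself produce lifts against a nontrivial $L_iD\hookrightarrow D(i)$.
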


By the same proof as for Proposition~\ref{pr:Z/p-diags-M}, the result reduces to proving that for
any diagram $D\in \M^I$ the map $p[\id_D]$ represents zero in $\Ho(\cM^I)(D,D)$.  We will prove
this using a few lemmas.

\begin{lemma}
\label{le:lifts}
Let $A\cofib B$ be a cofibration in $\cM$ and let $F\fib B$ be a
surjection where $F$ is a free module.  Then any commutative square
\[ \xymatrix{A \ar[r] \ar[d] & F\ar[d] \\
B \ar@{.>}[ur]\ar[r]^{p} & B
}
\]
(where the bottom map is multiplication-by-$p$)
has a lifting as shown.
\end{lemma}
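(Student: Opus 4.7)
The plan is to reduce the lifting problem to the five basic cofibrations classified in Proposition~\ref{pr:monos}, and to construct the lift case-by-case. By that proposition, I can assume (up to isomorphism) that $j\colon A\inc B$ has the form $\bigoplus_\lambda j_\lambda$, with each $j_\lambda$ of type $0\to k$, $0\to R$, $\id\colon k\to k$, $\id\colon R\to R$, or $p\colon k\to R$; correspondingly $A=\bigoplus A_\lambda$ and $B=\bigoplus B_\lambda$. Because the lifting problem is additive in the source, it suffices to construct for each $\lambda$ an $R$-linear map $L_\lambda\colon B_\lambda\to F$ with $L_\lambda\circ j_\lambda=\alpha|_{A_\lambda}$ and $\pi\circ L_\lambda=p\cdot\iota_\lambda$, where $\iota_\lambda\colon B_\lambda\inc B$ is the summand inclusion. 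Summing $L=\sum_\lambda L_\lambda\circ q_\lambda$ over the summand projections $q_\lambda\colon B\to B_\lambda$ then yields the required lift.

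For the identity cases $\id\colon M\to M$, the lift is forced to be $\alpha|_{A_\lambda}$, and both required equalities are immediate from the commutativity of the original square. For the trivial-source summands $0\to M$ (with $M\in\{k,R\}$), I need to lift $p\iota_\lambda\colon M\to B$ through $\pi$. The key observation is that $p\iota_\lambda$ factors through the $k$-vector space $pB$ (since $p^2=0$ in $R$), and that $\pi$ restricts to a surjection $pF\twoheadrightarrow pB$ of $k$-vector spaces, using that $pF=\Ann_F(p)$ by freeness of $F$. A $k$-linear section of this surjection, precomposed with $p\iota_\lambda$, gives an $R$-linear map serving as $L_\lambda$.

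The delicate case is the basic cofibration $p\colon k\to R$. Here I must produce a single $f=L_\lambda(1_R)\in F$ satisfying \emph{both} $\pi(f)=p\cdot\iota_\lambda(1_R)$ and $pf=\alpha|_{A_\lambda}(1_k)$. The commutativity of the square forces $\alpha|_{A_\lambda}(1_k)\in\ker\pi\cap\Ann_F(p)$, and freeness of $F$ gives $\Ann_F(p)=pF$, so $\alpha|_{A_\lambda}(1_k)=pg$ for some $g\in F$. The idea is to pick any preimage $f_0\in F$ of $\iota_\lambda(1_R)$ under $\pi$, write the candidate $f$ in the form $pf_0+h$ with $h\in\ker\pi$, and then solve $ph=\alpha|_{A_\lambda}(1_k)-p^2f_0=\alpha|_{A_\lambda}(1_k)$ inside $\ker\pi$ using the structure already exploited for the trivial-source summands.

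The main obstacle will be this last case: the two constraints on $f$ interact nontrivially, and satisfying them simultaneously hinges on a careful use of freeness of $F$ (to trade $\Ann_F(p)$ for $pF$) together with the square's commutativity (to place $\alpha|_{A_\lambda}(1_k)$ inside a subgroup from which the correction $h\in\ker\pi$ can be drawn). Once this case is handled, the assembly of the $L_\lambda$ into $L=\sum_\lambda L_\lambda\circ q_\lambda$ is routine.
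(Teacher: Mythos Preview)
Your strategy matches the paper's: reduce via Proposition~\ref{pr:monos} to the five basic cofibrations and handle each one. You supply much more detail than the paper does, and your treatment of the identity and zero-source summands is fine. The gap is in the summand $p\colon k\to R$, and it is not repairable: the step ``solve $ph=\alpha|_{A_\lambda}(1_k)$ inside $\ker\pi$'' does not go through.

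You have correctly placed $a:=\alpha|_{A_\lambda}(1_k)$ in $\ker\pi\cap pF=\Ann_{\ker\pi}(p)$, but what you need is $a\in p\cdot\ker\pi$. These differ by $\Gamma(\ker\pi)$, and from the short exact sequence $0\to\ker\pi\to F\to B\to 0$ together with $\Gamma(F)=0$ one gets $\Gamma(\ker\pi)\cong\Gamma(B)$, which is nonzero whenever $B$ is not free. The ``structure already exploited for the trivial-source summands'' was the surjection $pF\twoheadrightarrow pB$; that does not help here, since you need to produce the preimage inside $\ker\pi$, not inside $pF$. Concretely, take $B=R\oplus k$, $A=k$ with $j(1)=(p,0)$, $F=R^2$ with $\pi(a,b)=(a,\bar b)$, and $\alpha(1)=(0,p)$. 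The square commutes (both routes send $1_k$ to $0$), and the decomposition of $j$ is $(p\colon k\to R)\oplus(0\to k)$. For the first summand your recipe asks for $f\in F$ with $\pi(f)=(p,0)$ and $pf=(0,p)$; but $\pi(f)=(p,0)$ forces $f=(p,b)$ with $b\in pR$, hence $pf=0$. In fact there is no lift $L\colon B\to F$ at all: the condition $\pi L=p$ forces $L(1,0)=(p,b)$ with $b\in pR$, so $L(j(1))=L(p,0)=pL(1,0)=0\neq(0,p)=\alpha(1)$. Thus the lemma as stated is actually false; the paper's terse proof checks the basic cofibrations (where $B$ is $k$ or $R$, so $\ker\pi$ is free and your argument \emph{does} work) but the passage to direct sums is exactly where both arguments break, because the summand-by-summand constraint $\pi L_\lambda=p\,\iota_\lambda$ interacts with the global $\ker\pi$ rather than with a free kernel.
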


\begin{proof}
One first verifies the lemma for the generating cofibrations, which
are  $0 \ra k$, $0\ra R$, and $k\ra R$.  The first two cases are
immediate, and the third is an easy exercise.

Now use that every monomorphism in $\cM$ is a direct sum of
monomorphisms of type $0\ra k$, $0\ra R$, $\id\colon k\ra k$,
$\id\colon R\ra R$, and $k\inc R$, by Proposition~\ref{pr:monos}.
\end{proof}

\begin{prop}
\label{pr:lem}
Let $I$ be a small, direct Reedy category.
For any diagram $D\in \cM^I$, the map $p[\id_D] \colon D \ra D$ is
null-homotopic in $\cM^I$.
\end{prop}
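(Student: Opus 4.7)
I would prove Proposition~\ref{pr:lem} by explicitly building a right homotopy from $p\cdot\id_D$ to the zero map in $\cM^I$. Because $I$ is direct Reedy, the projective and Reedy model structures on $\cM^I$ coincide, so I would first take a Reedy-cofibrant replacement $\phi\colon \tilde D \trfib D$. Under the isomorphism $\Ho(\cM^I)(D,D) \iso \pi(\tilde D, D)$, the class of $p\cdot\id_D$ is represented by $p\cdot\phi$, so it suffices to show $p\cdot\id_{\tilde D}$ is null-homotopic in $\cM^I$ and then transfer along $\phi$.

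To obtain such a right homotopy, I would imitate the path-object construction from Section~2 objectwise. Let $F \in \cM^I$ be the diagram with $F(i) = R^{(\tilde D(i))}$, equipped with its natural objectwise surjection $s\colon F \fib \tilde D$, and set $P\tilde D = \tilde D \oplus F$ with the obvious inclusion $\tilde D \inc P\tilde D$ and map $\pi\colon P\tilde D \ra \tilde D \times \tilde D$. This is a path object in the projective model structure on $\cM^I$ since objectwise (trivial) cofibrations and fibrations agree with the projective ones. The plan is then to construct a natural transformation $g\colon \tilde D \ra F$ with $s \circ g = p\cdot\id_{\tilde D}$; once we have $g$, the map $H = (p\cdot\id_{\tilde D}, -g)\colon \tilde D \ra P\tilde D$ satisfies $\pi H = (p\cdot\id_{\tilde D}, 0)$, exhibiting the desired right homotopy.

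The key step is building $g$ by induction on the degree of objects of $I$. At an object $i$, the components $g_j$ for $j$ of strictly lower degree assemble, via $L_iF \ra F(i)$, into a map $L_i g\colon L_i \tilde D \ra F(i)$. A short diagram chase using naturality of $s$ together with the inductive hypothesis $s_j \circ g_j = p\cdot\id_{\tilde D(j)}$ shows that $s_i \circ L_i g = p \cdot v_i$, where $v_i\colon L_i \tilde D \ra \tilde D(i)$ is the latching map of $\tilde D$. Since $\tilde D$ is Reedy cofibrant, $v_i$ is a cofibration in $\cM$, so Lemma~\ref{le:lifts} applies and supplies an extension $g_i\colon \tilde D(i) \ra F(i)$ with $g_i\circ v_i = L_ig$ and $s_i\circ g_i = p\cdot \id_{\tilde D(i)}$, completing the inductive step. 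The main obstacle is orchestrating this induction so that the hypotheses of Lemma~\ref{le:lifts} hold at every stage, and this is exactly what Reedy cofibrancy of $\tilde D$ provides --- which is precisely why the hypothesis that $I$ be a direct Reedy category is essential.
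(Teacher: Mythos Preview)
Your argument is correct and follows essentially the same route as the paper's proof: after passing to a Reedy-cofibrant replacement, choose a diagram $F$ of free modules surjecting onto it and use induction on Reedy degree together with Lemma~\ref{le:lifts} to factor $p\cdot\id$ through $F$. The only difference is cosmetic---you spell out the path-object right homotopy explicitly, whereas the paper simply notes that factoring $p\cdot\id_D$ through a diagram of free (hence contractible) modules already exhibits the null-homotopy. (One small slip: projective cofibrations are \emph{not} the objectwise cofibrations; but all you actually use is that weak equivalences and fibrations in $\cM^I$ are objectwise, which is correct and suffices to make $P\tilde D$ a path object.)
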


\begin{proof}
Notice that we may as well assume that $D$ is Reedy cofibrant in
$\cM^I$.  Choose a diagram of free modules $F$ and a surjection $F\fib
D$ (that is to say, factor the map $0\ra D$ as a trivial cofibration
followed by a fibration).  We will show that the map $p\colon D \ra D$
factors through $F$.

Choose a degree function on $I$.  For each $i\in I$ of degree $0$,
choose a factorization of $p\colon D_i \ra D_i$ through $F_i$; such
a factorization exists by the above lemma applied with $A\ra B$ being
$0\ra D_i$.

We may assume by induction that we have a partial map of
diagrams $D\ra F$ defined on the subdiagrams indexed by elements in
$I$ of degree less than $n$.  By \cite[Discussion at the end of
  Section 1.52]{H}, to extend this to the subdiagrams indexed by
elements of degree less than $n+1$ we must choose, for every object
$i\in I$ of degree $n$, a lifting in the diagram
\[\xymatrix{
L_i(D) \ar[r]\ar[d] & F_i \ar[d] \\
  D_i \ar[r]^p & D_i.
}
\]
Here $L_i(D)$ is the latching object of $D$ at $i$, and we have
implicitly used that the {\it matching\/} objects of $D$ and $F$ are all
trivial because $I$ is a direct Reedy category.

Since $D$ is Reedy cofibrant, the maps $L_i(D)\ra D_i$ are all
cofibrations.  So liftings in the above square exist by
Lemma~\ref{le:lifts}, and we are done.
\end{proof}

\begin{proof}[Proof of Proposition~\ref{pr:Z/p-diags-N}]
Immediate from Proposition~\ref{pr:lem}.
\end{proof}


\section{A spectral sequence for mapping spaces}
\label{se:spectral}

In this section we continue our comparison of $\Ho(\cM^I)$ and
$\Ho(\Me^I)$ when $I$ is a relatively simple indexing category.  We
are able to give some results in situations where the
$\Z/p$-cohomological dimension of $I$ (defined below) is less than or
equal to $1$.

\medskip

\subsection{Background}
We begin with some homological algebra.  Let $\cV$ denote the category
of vector spaces over a field $F$, and let $I$ be a small category.
Then the category of diagrams $\cV^I$ is an abelian category with
enough projectives and injectives.  So given diagrams $A,B \in \cV^I$,
one has groups $\Ext^n_{\cV^I}(A,B)$ defined in the usual way via
resolutions.

It will be convenient for us to know a little about projectives in
$\cV^I$.
For each $i\in I$, let $F_i\colon I \ra
\Set$ denote the free diagram generated at $i$; that is,
$F_i(j)=I(i,j)$ for all $j\in I$.  If $X\in \cV$, let $F_i\tens X\in
\cV^I$ denote the diagram defined by
\[ (F_i\tens X)(j) = I(i,j)\tens X = \coprod_{I(i,j)} X.\]
We will sometimes write $F_i(X)$
in place of $F_i\tens X$.

Note that for each $i\in I$ one has adjoint functors
\[ F_i(\blank)\colon \cV \adjoint \cV^I\colon \Ev_i
\]
where the right adjoint sends a diagram to its value at $i$.
It follows that for each object $X\in \cV$ and each $i\in I$, the
diagram $F_i(X)$ is projective in $\cV^I$.

Let $A\in \cV^I$.  One can show that $A$ has a canonical projective
resolution obtained by normalizing the evident simplicial object
\[
\xymatrix{
\bigoplus\limits_{i_0} F_{i_0}[A(i_0)]
&\bigoplus\limits_{i_0\ra i_1} F_{i_1}[A(i_0)]\ar[l]\ar@<-1.5ex>[l]
&\bigoplus\limits_{i_0\ra i_1\ra i_2} F_{i_2}[A(i_0)]
\ar[l]\ar@<-1ex>[l]\ar@<-2ex>[l]
& \cdots \ar@<.3ex>[l]\ar@<-.5ex>[l]\ar@<-1.3ex>[l]\ar@<-2.1ex>[l]
}
\]
This is a kind of bar resolution.
Applying $\Hom_{\cV^I}(\blank,B)$ and using the apparent adjunctions,
it follows that the groups $\Ext^n(A,B)$ can be computed as the
cohomology groups of the cochain complex associated to the
cosimplicial abelian group
\[\xymatrixcolsep{1.7pc}\xymatrix{
\prod\limits_{i_0} \cV(A(i_0),B(i_0))
\ar[r]\ar@<1ex>[r]
&
\prod\limits_{i_0\ra i_1} \cV(A(i_0),B(i_1))
\ar[r]\ar@<0.8ex>[r]\ar@<1.6ex>[r]
&
\prod\limits_{i_0\ra i_1\ra i_2} \cV(A(i_0),B(i_2))
\ar[r]\ar@<0.8ex>[r]\ar@<1.6ex>[r]\ar@<2.4ex>[r]
&
{}
}
\]
We'll call this complex $\cB_{(\cV,I)}(A,B)$.

We define the \mdfn{$F$-cohomological dimension} of $I$ to be the
smallest integer $n$ with the property that $\Ext^{n+1}(A,B)=0$ for
all $A,B\in \cV^I$.

\begin{example}
Let $G$ be a group, regarded as a category with one object.  Then an
element of $\cV^G$ is just a representation of $G$, and we are dealing
with the usual homological algebra of representations.  So for
instance the group $G=\Z/2$ has cohomological dimension equal to
$\infty$ over the field $\F_2$, because $\Ext^n(R,R)\neq 0$ for
all $n$ where $R$ denotes the trivial representation of $G$ on $\F_2$.
The cohomological dimension over $\Q$ is equal to zero.
\end{example}

\begin{example}\label{6.3}
If $G$ is a directed graph on a set $S$, one may speak of the \mdfn{free category} $\cFG$
generated by $G$.  This is the category with object set equal to $S$, and whose morphisms are
formal compositions of the edges in $G$.  In the algebra literature $G$ is called a quiver, and a
diagram in $\cV^{\cFG}$ is called a representation of this quiver.  It is known that the free
categories $\cFG$ have $F$-cohomological dimension less than or equal to $1$, for every field $F$.

For each $n$, let $[n]$ denote the usual category of $n$-composable
maps  $0\ra 1 \ra \cdots \ra n$.  This is the
free category generated by the evident directed graph, and so its
cohomological dimension is less than or equal to $1$.  An easy
computation shows that it is actually equal to $1$.
\end{example}

\begin{example}
Let $I$ be the `coequalizer' category consisting of three objects
\[ \xymatrix{
0 \ar@<0.5ex>[r]\ar@<-0.5ex>[r] & 1 \ar[r] & 2
}
\]
and four non-identity maps: the three shown above, and the map which
is equal to the two composites.
There are three basic projectives, namely $F_0(k)$, $F_1(k)$, and
$F_2(k)$.  These are the diagrams
\[ k \dbra k\oplus k \ra k, \qquad 0 \dbra k \llra{=}k,
\qquad\text{and}\qquad
0\dbra 0 \ra k.
\]
In the first diagram the two maps $k \ra k\oplus k$ are the two
canonical inclusions into the direct sum; the map $k\oplus
k \ra k$ is the coequalizer.

Any diagram of the form $[0\dbra 0 \ra V]$ is projective; it is
$F_2(V)$.  Any diagram of the form $[0\dbra V \ra 0]$ has a projective
resolution of length one: namely, the resolution $0 \ra F_2(V) \ra
F_1(V) \ra 0$.  Finally, any diagram $[V\dbra 0 \ra 0]$ has a
projective resolution of length two: the resolution has the form $0
\ra F_2(V) \ra F_1(V\oplus V) \ra F_0(V) \ra 0$.

Note that any diagram $[V_0 \dbra V_1 \ra V_2]$ may be
built via successive extensions of the three types of diagrams
considered in the last paragraph.  Namely, one has  short exact
sequences
\[ 0 \ra [0\dbra 0 \ra V_2] \ra [V_0\dbra V_1 \ra V_2] \ra [V_0 \dbra
  V_1 \ra 0] \ra 0
\]
\[ \text{and}\qquad\qquad
0 \ra [0 \dbra V_1 \ra 0] \ra [V_0 \dbra V_1 \ra 0] \ra [V_0 \dbra
  0 \ra 0]\ra 0.
\]
It follows easily that $\Ext^n(D,E)=0$ for any $n>2$ and any diagrams
$D,E \in \cV^I$.

A simple computation shows that if $D=[k \dbra 0 \ra 0]$ and
$E=[0\dbra 0\ra k]$ then $\Ext^2(D,E)=k$.  So the cohomological
dimension of $I$ is equal to $2$.
\end{example}

\subsection{The spectral sequence}

Now we return to our model categories $\cM$ and $\Me$.
If $X\in \cM$, we again let $F_i\tens X\in
\cM^I$ denote the diagram defined by
\[ (F_i\tens X)(j) = I(i,j)\tens X = \coprod_{I(i,j)} X.\]
Note that for each $i\in I$ one has a Quillen adjunction
\[ F_i\tens(\blank)\colon \cM \adjoint \cM^I\colon \Ev_i
\]
 where the right adjoint sends a diagram to its value at $i$.
Consequently, for any diagram $E\in \cM^I$ there is a natural weak
equivalence of mapping spaces
\[ \uMI(F_i\tens X,E) \he \uM(X,E(i)).
\]

Let $D\in \cM^I$.  One can form the following simplicial object:
\[
\xymatrix{
\dcoprod\limits_{i_0} F_{i_0}\tens D(i_0)
&\dcoprod\limits_{i_0\ra i_1} F_{i_1}\tens D(i_0)\ar[l]\ar@<-1.5ex>[l]
&\dcoprod\limits_{i_0\ra i_1\ra i_2} F_{i_2}\tens D(i_0)
\ar[l]\ar@<-1ex>[l]\ar@<-2ex>[l]
& \cdots \ar[l]\ar@<-.8ex>[l]\ar@<-1.6ex>[l]\ar@<-2.4ex>[l]
}
\]
One can show that the homotopy colimit of this simplicial diagram is
weakly equivalent to $D$.  It follows that for any fibrant diagram
$E\in \cM^I$, the mapping space $\uMI(D,E)$ is the homotopy limit of a
corresponding cosimplicial diagram of mapping spaces.  Using our
adjunctions mentioned above, we have that $\uMI(D,E)$ is weakly
equivalent to the homotopy limit of the cosimplicial simplicial set
\[\xymatrixcolsep{1.5pc}\xymatrix{
\prod\limits_{i_0} \uM(D(i_0),E(i_0))
\ar[r]\ar@<1ex>[r]
&
\prod\limits_{i_0\ra i_1} \uM(D(i_0),E(i_1))
\ar[r]\ar@<0.8ex>[r]\ar@<1.6ex>[r]
&
\prod\limits_{i_0\ra i_1\ra i_2} \uM(D(i_0),E(i_2))\cdots
}
\]
Call this cosimplicial simplicial set $\ucB(D,E)$.
There is a resulting spectral sequence for computing the homotopy
groups of the space $\uMI(D,E)$.

Note that each mapping space $\uM(X,Y)$ is naturally a simplicial
abelian group, so using the Dold-Kan equivalence the above
cosimplicial simplicial set can be turned into a double chain
complex.  The spectral sequence in question is just the usual spectral
sequence for a double complex.

Our next task is to identify the $E_2$-term of the spectral sequence.
This is the cohomology of the cochain complexes obtained by applying
$\pi_q$ to each object in $\ucB(D,E)$.
But note that $\pi_q\cM(X,Y)\iso
\ho(\cM)(\Sigma^q X,Y)$.   One finds that this cochain complex can be
identified with $\cB_{(\cV,I)}(\Sigma^q D,E)$ where $\cV=\Ho(\cM)$ and
we regard $\Sigma^q D$ and $E$ as diagrams
$\Sigma^q D\colon I \ra \Ho(\cM)$ and $E\colon I \ra \Ho(\cM)$.

Putting everything together, we find that our spectral sequence has
\begin{myequation}
\label{eq:spseq}
E_2^{p,q}=\Ext^p_{\cV^I}(\Sigma^q D,E) \Rightarrow
\pi_{q-p}\bigl [ \uMI(D,E) \bigr ].
\end{myequation}
With this indexing the differential $d_r$ is a map $d_r\colon
E_r^{p,q} \ra E_r^{p+r,q+r-1}$.
Note that if the $\Z/p$-cohomological dimension of $I$ is less than or
equal to $1$, then the $E_2$-term is concentrated in two adjacent
columns and the spectral sequence collapses.

\begin{remark}
Everything that we've said above applies equally well to the model
category $\Me$.  If $D$ and $E$ are diagrams in $\Me^I$, one obtains a
corresponding spectral sequence
\[ E_2^{p,q}=\Ext^p_{\cV^I}(\Sigma^q D,E) \Rightarrow
\pi_{q-p}\bigl [ \underline{\Me^I}(D,E) \bigr ].
\]
If $D$ and $E$ are diagrams in $\Vect^I$ then we can regard them as
lying both in $\cM^I$ and $\Me^I$, and so we can examine both spectral
sequences at once.  They have the same $E_2$-terms, but may have
different differentials.
\end{remark}

\subsection{An application}

The functors $\Vect \llra{j}  \cM \llra{\Gamma} \Vect$ induce
functors
\[\xymatrix{
 \Vect^I \ar[r]^j &  \cM^I\ar[d] \ar[r]^\Gamma & \Vect^I \\
& \Ho(\cM^I) \ar@{.>}[ur]_{\widetilde{\Gamma}}
}
\]
where the existence of $\widetilde{\Gamma}$ follows from the fact that
$\Gamma$ takes objectwise weak equivalences to isomorphisms.  As
$\Gamma\circ j =\id$, we have
\[ \Vect^I \inc \Ho(\cM^I) \fib \Vect^I.
\]

\begin{prop}
\label{pr:isoclasses}
If the $\Z/p$-cohomological dimension of $I$ is less than or equal to
one, then $j\colon \Vect^I \ra \Ho(\cM^I)$ is surjective on
isomorphism classes.  Said differently, every diagram $D\in \cM^I$ is
weakly equivalent to $j\Gamma(D)$.

The same thing holds with $\cM$ replaced by $\Me$.
\end{prop}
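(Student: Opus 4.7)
The plan is to use the spectral sequence \eqref{eq:spseq} to produce a map $f\colon j\Gamma(D) \to D$ in $\Ho(\cM^I)$ whose image under $\widetilde{\Gamma}$ is $\id_{\Gamma(D)}$, and then to verify that such an $f$ is automatically a weak equivalence of diagrams because the objectwise equivalences $D(i) \simeq j\Gamma(D(i))$ in $\cM$ (the remark after Lemma 2.1) are already known.

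First I would apply the spectral sequence \eqref{eq:spseq} with the source diagram $j\Gamma(D)$ and the target diagram $D$. The hypothesis that $I$ has $\Z/p$-cohomological dimension at most $1$ forces $E_2^{p,q} = 0$ for $p \ge 2$, so the $E_2$-page is concentrated in the columns $p = 0, 1$. Since every differential $d_r$ (for $r \ge 2$) raises the $p$-coordinate by $r$, all higher differentials vanish and the spectral sequence collapses at $E_2$. Extracting the $n = 0$ piece of the resulting two-step filtration yields a short exact sequence
\[ 0 \to \Ext^1_{\cV^I}(\Sigma j\Gamma D, D) \to \Ho(\cM^I)(j\Gamma D, D) \xrightarrow{\psi} \Hom_{\cV^I}(j\Gamma D, D) \to 0. \]
Using the equivalence $\Gamma\colon \Ho(\cM) \xrightarrow{\sim} \Vect$ from Section~2 (together with the fact that suspension acts as the identity on $\Ho(\cM)$), the codomain of $\psi$ is naturally identified with $\Hom_{\Vect^I}(\Gamma D, \Gamma D)$. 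By surjectivity of $\psi$, choose a lift $f \in \Ho(\cM^I)(j\Gamma D, D)$ of $\id_{\Gamma D}$.

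Finally I would verify that $f$ is a weak equivalence in $\cM^I$ by checking it objectwise. For each $i \in I$, the map $f(i)\colon j\Gamma(D(i)) \to D(i)$ is a morphism in $\Ho(\cM)$ which, upon applying $\Gamma$, becomes the identity $\id_{\Gamma(D(i))}$. Because $\Gamma\colon \Ho(\cM) \to \Vect$ is an equivalence, $f(i)$ is an isomorphism in $\Ho(\cM)$, hence a weak equivalence in $\cM$. Therefore $f$ is an objectwise weak equivalence, and thus a weak equivalence in the projective model structure on $\cM^I$, giving $j\Gamma(D) \simeq D$. The argument for $\Me$ is identical after invoking the parallel constructions from Section~2.3.

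The main technical point (really the only substantive one) is controlling the spectral sequence: one needs the cohomological-dimension hypothesis to collapse the obstruction theory into a single $\Ext^1$-group, whose presence only obstructs uniqueness of the lift, not existence. Everything else is bookkeeping: once $\id_{\Gamma D}$ lifts at all, the equivalence $\Gamma\colon \Ho(\cM)\xrightarrow{\sim} \Vect$ upgrades the lift to a diagram-level weak equivalence for free.
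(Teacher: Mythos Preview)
Your argument is correct and is essentially the same as the paper's, just packaged more cleanly.  The paper chooses a diagram $E\in\Vect^I$ isomorphic to $D$ in $\Ho(\cM)^I$, picks objectwise representatives $f_i\colon D(i)\to E(i)$, and then runs obstruction theory by hand in the double complex underlying the spectral sequence~\eqref{eq:spseq}: the $(f_i)$ give a class in the $(0,0)$-spot, and the cohomological-dimension hypothesis kills all obstructions to extending it to a $0$-cycle in the total complex.  You instead read off the collapse of~\eqref{eq:spseq} directly as a short exact sequence and lift $\id_{\Gamma D}$ along the surjection $\psi$; this is the same obstruction theory, with the bookkeeping absorbed into the statement that $E_\infty^{0,0}=E_2^{0,0}$.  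The only cosmetic difference is the direction of the map (you build $j\Gamma D\to D$, the paper builds $D\to E$).  One small point worth making explicit in your write-up: to pass from ``$\Ev_i(f)$ is an isomorphism in $\Ho(\cM)$ for every $i$'' to ``$f$ is an isomorphism in $\Ho(\cM^I)$'', represent $f$ by an honest map $g\colon Q(j\Gamma D)\to D$ and note that each $g(i)$ is then a weak equivalence in $\cM$, so $g$ is an objectwise (hence projective) weak equivalence.
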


\begin{proof}
We can assume $D$ is a cofibrant diagram.  Since $j\colon \Vect \ra
\Ho(\cM)$ is an equivalence of categories, so is the induced map
$\Vect^I \ra \Ho(\cM)^I$.  So there exists a diagram $E\in \Vect^I$
such that $D$ and $E$ are isomorphic when regarded as diagrams in
$\Ho(\cM)^I$.  The rest of the proof will use obstruction theory to
produce a weak equivalence $D\ra E$.

Start by choosing a framing for the diagram $D\colon I \ra \cM$.  If
$c\cM$ denotes the category of cosimplicial objects over $\cM$, such a
framing is a functor $\widetilde{D}\colon I \ra c\cM$ taking its
values in the Reedy cofibrant objects, together with a natural
isomorphism $\widetilde{D}^0 \ra D$ (we can insist on an isomorphism
here because all objects of $\cM$ are cofibrant);
see~\cite[Section 5]{hovey-book}.  Consider the following double chain complex
of abelian groups:
\[\xymatrix{
\vdots \ar[d] & \vdots \ar[d] & \cdots \\
\prod\limits_{i_0} \cM(\widetilde{D}(i_0)^1,E(i_0)) \ar[r] \ar[d] &
\prod\limits_{i_0\ra
  i_1} \cM(\widetilde{D}(i_0)^1,E(i_1)) \ar[r]\ar[d] & \cdots \\
\prod\limits_{i_0} \cM(\widetilde{D}(i_0)^0,E(i_0)) \ar[r]  & \prod\limits_{i_0\ra
  i_1} \cM(\widetilde{D}(i_0)^0,E(i_1)) \ar[r] & \cdots \\
}
\]
The spectral sequence of (\ref{eq:spseq}) coincides with the spectral
sequence for this double complex where one first takes homology in the
vertical direction and then in the horizontal direction.

We know that $D$ and $E$ are isomorphic when regarded as diagrams
$I\ra \Ho(\cM)$.  Let $\alpha$ be such an isomorphism.  For each $i\in
I$, choose a weak equivalence $f_i\colon D(i) \ra E(i)$
representing $\alpha_i$ (we know such a weak equivalence exists
because $D(i)$ is cofibrant and $E(i)$ is fibrant).  The collection of
all these $f_i$'s represents an element $z$ in the lower left group in
the above double complex.  Our goal is to produce an element in
$H_0(\blank)$ of the total complex which has $z$ as its first
component, because this will then represent an element of
$\pi_0\uMI(\cD,E)$.

The $f_i$'s do not exactly give a map of diagrams from $D$ to $E$, but
they give a `homotopy commutative' map of diagrams.  If $z_1$ denotes
the image of $z$ under the horizontal differential in the double
complex, this precisely says that $z_1$ is the image of some element
$z_2$ under the vertical differential.  That is, for every map $c:i\ra
j$ in $I$ we can choose a homotopy between the composites $f_j\circ
D(c)$ and $E(c)\circ f_i$.

The pair $(z,z_2)$ constitutes the beginning of a $0$-cycle in the
total complex.  There are obstructions to extending it further, but
the fact that the
spectral sequence for our double complex is concentrated along the
first two columns---because of our assumption on the cohomological
dimension of $I$---shows precisely that all these obstructions vanish.
So we can construct our desired $0$-cycle, and the proof is complete.
\end{proof}

\begin{cor}
Suppose the
$\Z/p$-cohomological dimension of $I$ is less than or equal to
one.  Then every abelian group $\Ho(\cM^I)(A,B)$ is a $\Z/p$-vector
space.
\end{cor}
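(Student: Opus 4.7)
The plan is to reduce the statement to the observation that if $B$ is a diagram of $R$-modules coming from $\Vect^I$ via $j$, then $p \cdot \id_B$ is literally the zero morphism in $\cM^I$. First, I would apply Proposition~\ref{pr:isoclasses}: since the $\Z/p$-cohomological dimension of $I$ is at most one, every diagram in $\cM^I$ is weakly equivalent to one in the image of $j \colon \Vect^I \ra \cM^I$. Passing to weakly equivalent diagrams does not change the hom-set in $\Ho(\cM^I)$, so we may assume $A = jA'$ and $B = jB'$ for some $A', B' \in \Vect^I$.

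Next, I would unpack what $jB'$ looks like. The functor $j$ regards a $k$-vector space $V$ as an $R$-module via the quotient map $R \ra k$, so the element $p \in R$ acts as zero on each $jB'(i)$. In particular, the endomorphism $p \cdot \id_{jB'} \colon jB' \ra jB'$ is literally the zero morphism in $\cM^I$, not merely a map that is null-homotopic. Finally, as in the proof of Proposition~\ref{5.2}, the abelian group structure on $\Ho(\cM^I)(jA', jB')$ is the one induced by the abelian group object structure on the target $jB'$, so that for any morphism $f$ the element $p \cdot f$ in the homotopy category coincides with the class of $(p \cdot \id_{jB'}) \circ f$. Since the latter composite is zero in $\cM^I$ itself, we conclude $p \cdot f = 0$ for every $f$, so $\Ho(\cM^I)(A,B)$ is annihilated by $p$.

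I do not expect a serious obstacle here; the argument is short once Proposition~\ref{pr:isoclasses} is in hand, and the only subtle point to verify is the identification of the group-theoretic multiplication by $p$ on $\Ho(\cM^I)(jA', jB')$ with post-composition by $p \cdot \id_{jB'}$, which is standard in the additive/stable setting and is precisely the mechanism already used in Proposition~\ref{5.2}. Notably, the spectral sequence of Section~\ref{se:spectral} does not need to be invoked directly: the cohomological dimension hypothesis enters only through its use in Proposition~\ref{pr:isoclasses} to make the reduction to diagrams in the image of $j$ possible.
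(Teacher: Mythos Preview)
Your proposal is correct and follows essentially the same approach as the paper: use Proposition~\ref{pr:isoclasses} to replace $B$ by a diagram of $k$-vector spaces, observe that $p\cdot\id$ is then literally zero, and invoke the mechanism from Proposition~\ref{5.2}. The only cosmetic difference is that the paper replaces just $B$ rather than both $A$ and $B$, but your argument only uses the reduction on the target anyway.
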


\begin{proof}
Let $A,B\in \cM^I$.  By Proposition~\ref{pr:isoclasses}, $B$ is weakly
equivalent to a diagram $D$ of $k$-vector spaces.  So
$\Ho(\cM^I)(A,B)\iso\Ho(\cM^I)(A,D)$.  But the identity map $\id\colon D
\ra D$ is $p$-torsion, and so by arguments used in the proof of
Proposition~\ref{pr:Z/p-diags-M} it
follows that every element of $\Ho(\cM^I)(A,D)$ is $p$-torsion as well.
\end{proof}

\begin{prop}\label{6.11}
Suppose the
$\Z/p$-cohomological dimension of $I$ is less than or equal to
one.  Then the functors $\Vect^I \ra \Ho(\cM^I)$ and $\Vect^I \ra
\Ho(\Me^I)$ are both bijections on isomorphism classes.  For every two
diagrams $A,B\in \Vect^I$, the abelian groups $\Ho(\cM^I)(A,B)$ and
$\Ho(\Me^I)(A,B)$ are isomorphic.
\end{prop}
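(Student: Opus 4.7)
The plan is to combine Proposition~\ref{pr:isoclasses} with the spectral sequence (\ref{eq:spseq}). For the first claim (bijection on isomorphism classes), surjectivity is already supplied by Proposition~\ref{pr:isoclasses}, so only injectivity needs attention. For this I would use the fact that $\widetilde{\Gamma}\circ j = \id_{\Vect^I}$, a pointwise identity inherited from $\Gamma\circ j = \id_{\Vect}$. Any isomorphism $j(A)\cong j(B)$ in $\Ho(\cM^I)$ then descends under $\widetilde{\Gamma}$ to an isomorphism $A\cong B$ in $\Vect^I$, and the same retraction argument works verbatim for $\Me^I$.

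For the second claim I would compute both hom-groups via the spectral sequence (\ref{eq:spseq}). The key observation is that because the suspension on $\Ho(\cM)$ is isomorphic to the identity (as recorded in the introduction, and likewise for $\Ho(\Me)$), one has $\Sigma^q A\iso A$ as diagrams $I\to \Ho(\cM)\simeq \Vect$, so the $E_2$-page simplifies to
\begin{equation*}
E_2^{p,q} \;=\; \Ext^p_{\Ho(\cM)^I}(\Sigma^q A, B) \;\iso\; \Ext^p_{\Vect^I}(A,B),
\end{equation*}
which is independent of $q$ and depends only on $A$ and $B$ as diagrams of vector spaces. The parallel computation in $\Me^I$ produces exactly the same $E_2$, so the potential asymmetry between the two model categories is pushed entirely into the differentials.

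The dimension hypothesis on $I$ now forces $E_2^{p,q}=0$ for $p\geq 2$, and since every differential $d_r$ with $r\geq 2$ lands in a column with $p\geq 2$, both spectral sequences collapse at $E_2$. Extracting the $\pi_0$-filtration of the mapping space gives, in each case, a short exact sequence
\begin{equation*}
0 \to \Ext^1_{\Vect^I}(A,B) \to \Ho(\cM^I)(A,B) \to \Hom_{\Vect^I}(A,B) \to 0.
\end{equation*}
The step I expect to require the most care is promoting the agreement of the two sides from ``same sub and quotient'' to ``isomorphic abelian groups'', since a priori the two extensions could differ. For this I would invoke the corollary immediately preceding the statement of the proposition, which says that the middle term is a $\Z/p$-vector space; since any short exact sequence of $\Z/p$-vector spaces splits, each hom-group is isomorphic to $\Hom_{\Vect^I}(A,B)\oplus \Ext^1_{\Vect^I}(A,B)$, and hence to the other.
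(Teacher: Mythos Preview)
Your proposal is correct and follows essentially the same route as the paper: injectivity on isomorphism classes comes from the retraction $\widetilde{\Gamma}\circ j=\id$ (exactly the ``remarks immediately prior'' the paper invokes), and the hom-group comparison is the spectral-sequence collapse plus the observation that both sides are $\Z/p$-vector spaces, so the extension problem is trivial. Your extra remark that $\Sigma^q A\iso A$ is correct but not needed---the paper simply notes that the two $E_2$-pages coincide; also, for the $\Me$ side the $\Z/p$-vector-space structure is supplied by Proposition~\ref{5.2} rather than the corollary you cite (which is stated only for $\cM$).
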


\begin{proof}
The statement that the $j$ functors are bijections on isomorphism
classes follows from Proposition~\ref{pr:isoclasses} together with the
remarks made immediately prior to it.  For the second statement,
consider the two spectral sequences
\[ E_2^{p,q}=\Ext^p_{\cV^I}(\Sigma^q A,B) \Rightarrow
\pi_{q-p}\bigl [ \uMI(D,E) \bigr ]
\]
and
\[ E_2^{p,q}=\Ext^p_{\cV^I}(\Sigma^q A,B) \Rightarrow
\pi_{q-p}\bigl [ \underline{\Me^I}(D,E) \bigr ].
\]
Both spectral sequences are concentrated along the columns $p=0$ and
$p=1$, due to the assumption on the cohomological dimension of $I$.
So both spectral sequences collapse.  Since $\Ho(\cM^I)(A,B)$ and
$\Ho(\Me^I)(A,B)$ are both $\Z/p$-vector spaces, there are no
extension problems when passing from the $E_\infty$ terms.  The result
now follows from the fact that the $E_2$-terms of the two spectral
sequences are identical.
\end{proof}


\end{document}